 \documentclass[12pt,reqno]{amsart}

\usepackage{amssymb}

\usepackage{color}

\usepackage{amssymb}
\usepackage{amsmath}
\usepackage{amsthm}
\usepackage{times}
\usepackage{graphicx}

\textheight22truecm
\textwidth15truecm

\usepackage{color}
\usepackage{amsthm}
\newtheorem{theorem}{Theorem}
\newtheorem{lemma}{Lemma}[section]
\newtheorem{corollary}[lemma]{Corollary}
\newtheorem{proposition}[lemma]{Proposition}

\usepackage[curve,all]{xy}
 \xyoption{curve}
 \xyoption{color}
 \xyoption{line}
\xyoption{arc}

\theoremstyle{definition}
\newtheorem{definition}[lemma]{Definition}
\newtheorem{example}[lemma]{Example}

\theoremstyle{remark}
\newtheorem{remark}[lemma]{Remark}

\numberwithin{equation}{section}

\newcommand\dashmapsto{\mapstochar\dashrightarrow}

\newcommand{\calC}{\mathcal{C}}

\newcommand{\frakS}{\mathfrak{S}}

\newcommand{\bbA}{\mathbb{A}}

\newcommand{\bbP}{\mathbb{P}}

\newcommand{\p}{\mathbb{P}}
\newcommand{\A}{\mathbb{A}}
\newcommand{\bbQ}{\mathbb{Q}}

\newcommand{\bbZ}{\mathbb{Z}}

\renewcommand{\k}{\mathrm{k}}

\newcommand{\Aut}{\textup{Aut}}

\newcommand{\Spec}{\textup{Spec}~}

\renewcommand\emptyset\varnothing

\newcommand{\beq}{\begin{equation}}
\newcommand{\eeq}{\end{equation}}
\definecolor{grey}{rgb}{0.75,0.75,0.75}
\begin{document}
\title[Automorphisms of cluster algebras of rank 2]{Automorphisms of cluster algebras of rank 2}
\thanks{The first gratefully acknowledge support by the Swiss National Science Foundation Grant  "Birational Geometry" PP00P2\_128422 /1. }


\author{J\'er\'emy Blanc}
\address{J\'er\'emy Blanc, Universit\"{a}t Basel, Mathematisches Institut, Rheinsprung $21$, $4051$ Basel, Switzerland.}
\email{jeremy.blanc@unibas.ch}

\author{Igor Dolgachev}
\address{Igor Dolgachev, Department of Mathematics, University of Michigan, 525 E. University Av., Ann Arbor, Mi, 49109, USA}
\email{idolga@umich.edu}

\dedicatory{To the memory of Andrei Zelevinsky (1953-2013)}

\begin{abstract} 
We compute the automorphism group of the affine surfaces with the coordinate ring isomorphic to a cluster algebra of rank $2$.
\end{abstract}
\subjclass[2010]{13F60, 14R20, 14E07}

\maketitle

\section{Introduction} 
\subsection{Definition of $\calC(a,b)$}
A cluster algebra $\calC(a,b)$ of rank 2 is a subring of the field of rational functions $\bbQ(y_1,y_2)$ generated by elements $y_n, n\in \bbZ,$ defined inductively by the relations
\begin{eqnarray}\label{fr}
y_{n-1}y_{n+1} = \begin{cases}
      y_n^a+1& \text{if}\  n \ \text{is even}, \\
      y_n^b+1& \text{otherwise}
\end{cases}
\end{eqnarray}
(see \cite{Sherman}). Here $a,b$ are fixed positive integers. The elements $y_n$ are called \emph{cluster variables} and the pairs $y_n,y_{n+1}$ are called \emph{clusters}. It follows from \cite{BFZ}, Corollary 1.21 that any four consecutive cluster variables, say $y_1,y_2,y_3,y_4$, generate $\calC(a,b)$ as a $\bbZ$-algebra and the relations are defining relations. Thus 
$$\calC(a,b) \cong \bbZ[y_1,y_2,y_3,y_4]/(y_1y_3-y_2^a\ -1,y_2y_4-y_3^b-1).$$

In the general context of cluster algebras, the algebra $\calC(a,b)$ corresponds to a skew-symmetrizable seed matrix 
$$\begin{pmatrix}0&a\\
-b&0\end{pmatrix}.$$
If $a = b$, it can be also defined by the quiver with two vertices and $a$ arrows from the first one to the second one. 
Note that one can also consider the case $a = b = 0$ when the algebra $\calC(0,0)$ is the algebra of Laurent polynomials in two variables. We omit this well-known case.

  The cluster algebra $\calC(a,b)$ is called of \emph{finite type} if the number of cluster variables is finite. This happens if and only if 
 $ab \le 3$ (this follows from \cite{FZ}, Theorem 6.1). In this case, we have the periodicity
 $$y_n = y_m \Leftrightarrow n\equiv m \mod h+2,$$
 where $h = 3, 4, 6$ if $ab = 1, 2, 3$, respectively. The algebra $\calC(a,b)$ is called of type $A_2,B_2,G_2$ if $ab= 1,2,3$, respectively (the remaining type $A_1\times A_1$ of a finite root system of rank 2 corresponds to the case $(a,b) = (0,0)$).
 In this paper we compute the group of automorphisms of the cluster algebra $\calC(a,b)$.
\subsection{The cluster automorphisms $\sigma_p$}
As observed in \cite{Sherman}, the transformations $\{\sigma_p\}_{p\in \mathbb{Z}}$ of $\calC(a,b)$ defined by 
$$\sigma_p:y_n\mapsto y_{2p-n}$$
 preserve the relations \eqref{fr} and define automorphisms of $\calC(a,b)$ for arbitrary parameters $a,b$.
 
  For example,
 $\sigma_2,\sigma_3$ send $y_1,y_2,y_3,y_4$ respectively onto  $y_0,y_1,y_2,y_3$ and $y_2,y_3,y_4,y_5$, 
 confirming that $\{y_0,y_1,y_2,y_3\}$ and $\{y_2,y_3,y_4,y_5\}$ are also sets of generators.

 Using the identities $y_1^b+1=y_1^b(y_3^b+1)-(y_1y_3-1)\sum_{i=0}^{b-1} (y_1y_3)^i$ and $y_4^a+1=y_4^a(y_2^a+1)-(y_2y_4-1)\sum_{i=0}^{a-1} (y_2y_4)^i$, we obtain in $\calC(a,b)$ the equalities
 
  $$\begin{array}{rclcl}
  y_0&=&\frac{y_1^b+1}{y_2}&=&y_1^by_4-y_2^{a-1}\sum_{i=0}^{b-1} (y_1y_3)^i,\\
  y_5&=&\frac{y_4^a+1}{y_3}&=&y_4^ay_1-y_3^{b-1}\sum_{i=0}^{b-1} (y_2y_4)^i.\vphantom{\Big)}
  \end{array}$$
 Hence, $\sigma_2,\sigma_3\in \Aut(\calC(a,b))$ correspond respectively to the following automorphisms of $\Spec \calC(a,b)$ (we again write $\sigma_i$ the dual geometric action induced by $\sigma_i$, and will in fact only work with this latter)
 $$\begin{array}{rrcl}
 \sigma_2\colon &(y_1,y_2,y_3,y_4)&\mapsto &(y_3,y_2,y_1,y_1^by_4-y_2^{a-1}\sum_{i=0}^{b-1} (y_1y_3)^i),\\
 \sigma_3\colon &(y_1,y_2,y_3,y_4)&\mapsto &(y_4^ay_1-y_3^{b-1}\sum_{i=0}^{b-1} (y_2y_4)^i,y_4,y_3,y_2).
 \end{array}$$
It is immediately checked that  $\sigma_2,\sigma_3$ satisfy $\sigma_2^2 = \sigma_3^2 = 1$ and generate a finite dihedral group $D_{2n}$ of order $2n$ or infinite dihedral group $D_\infty \cong \mathbb{Z}/2\mathbb{Z}\star \mathbb{Z}/2\mathbb{Z}=\mathbb{Z}\rtimes \mathbb{Z}/2\mathbb{Z}$.  The periodicity of the set of cluster variables easily implies that 
 $$n = \begin{cases}
      10,&ab = 1, \\
      6,&ab = 2,\\
      8,&ab = 3,\\
      \infty, & ab > 3.
\end{cases} 
 $$
 \subsection{The results}
 Our description of the group $ \Aut(\calC(a,b))\simeq\Aut(\Spec\calC(a,b))$, is given by the description of  
 $\Aut(\Spec\calC(a,b)\otimes \bbQ)$ with geometric tools, and by observing that the generators of the automorphism groups are defined over $\bbZ$, hence both groups are equal.
 
 In fact, we will precisely describe the group structure of the group $$\Aut(\Spec\calC(a,b)\otimes \k)$$ for any field  $\k$ of characteristic $0$ or characteristic prime to $ab$. This is the group of automorphisms of the affine surface over $\k$
 $$X(a,b) = \Spec \calC(a,b)\otimes \k.$$ 
 
 \begin{theorem}\label{TheThm} The group 
 $$\mu_{a,b}:=\{(\mu,\nu)\in \k^*\mid \mu^a=\nu^b=1\}\subset \k^*{}^2$$
 acts on $X(a,b)$ as
 $$(y_1,y_2,y_3,y_4)\mapsto (\nu^{-1}y_1,\mu y_2,\nu y_3,\mu ^{-1}y_4).$$
 If $a=b$, then there is a group $H_{a,a}\subset \Aut(X(a,b))$ of order $2$, acting on $X(a,b)$ via
 $$\sigma_{5/2}\colon (y_1,y_2,y_3,y_4)\mapsto (y_4,y_3,y_2,y_1).$$
 We have
 $$\Aut(X(a,b))\cong\left\{\begin{array}{lll}
 \langle \sigma_2,\sigma_3\rangle &\simeq D_{10}&\mbox{ if }(a,b)=(1,1),\\
 \langle \sigma_2,\sigma_3\rangle \times\mu_{2,1}&\simeq D_6\times \mu_{2,1}\simeq D_{12} &\mbox { if } (a,b)=(2,1),\\
 \langle \sigma_2,\sigma_3\rangle \ltimes\mu_{3,1}&\simeq D_8\ltimes \mu_{3,1} &\mbox { if } (a,b)=(3,1),\\
( \langle \sigma_2,\sigma_3\rangle \ltimes\mu_{a,a})\rtimes H_{a,a}&\simeq ( D_{\infty} \ltimes\mu_{a,a})\rtimes \mathbb{Z}/2\mathbb{Z} &\mbox { if } a=b\ge 2,\\
\langle \sigma_2,\sigma_3\rangle \ltimes\mu_{a,b}&\simeq  D_{\infty} \ltimes\mu_{a,b} &\mbox { if } a\not= b, ab\ge 4.\\
 \end{array}\right.$$
 \end{theorem}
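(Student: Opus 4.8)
The plan is to argue geometrically, studying the smooth affine surface $X=X(a,b)$ through a projective compactification, after extending scalars to an algebraic closure $\bar\k$; descent is then easy, since $\sigma_2,\sigma_3$ and $\sigma_{5/2}$ are given by formulas with integer coefficients while the diagonal scalings are $\k$-rational exactly when the relevant roots of unity lie in $\k$, reproducing the $\k$-dependent group $\mu_{a,b}$. The projection $(y_1,y_2,y_3,y_4)\mapsto(y_2,y_3)$ is birational from $X$ to $\A^2$: it is an isomorphism over the cluster torus $\{y_2y_3\neq0\}\cong\bbG_m^2$, it sends the $a$ lines forming $\{y_3=0\}$ to the $a$ points of the axis $\{y_3=0\}$ with $y_2^a=-1$, and it sends the $b$ lines forming $\{y_2=0\}$ to the $b$ points of the axis $\{y_2=0\}$ with $y_3^b=-1$; these $a+b$ points are distinct because $\cha\k$ is prime to $ab$. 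Hence $X$ is obtained from $\PP^2$ by blowing up these $a+b$ points on the two coordinate lines and deleting the strict transforms of the two lines and of the line at infinity. The resulting smooth rational surface $\bar X$ satisfies $K_{\bar X}+D\sim0$, where the boundary $D=\bar X\setminus X$ is a triangle of three smooth rational curves of self-intersections $1-a$, $1-b$ and $1$, so $(\bar X,D)$ is a log Calabi--Yau pair and the $a+b$ exceptional curves are exactly the affine lines on $X$ cut out by $y_2$ and $y_3$.

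The heart of the proof is to show that $\Aut(X)$ permutes an intrinsic family of curves. Since $K_{\bar X}+D\sim0$, the surface $X$ carries a regular nowhere-vanishing $2$-form, unique up to scalar, whose polar divisor on $\bar X$ is exactly $D$; consequently every $\phi\in\Aut(X)$ preserves this form up to scalar and extends to a birational self-map of $\bar X$ preserving the boundary cycle $D$, i.e.\ to a pseudo-automorphism of the log Calabi--Yau pair. I would then characterize the cluster variables $y_n$ intrinsically, up to scalar, among all elements of the coordinate ring --- for instance through the geometry of the affine lines they cut out, namely as the internal negative curves that appear across the various smooth log Calabi--Yau compactifications of $X$ obtained by iterated mutation (corner blow-ups followed by interior blow-downs) --- and deduce that $\phi$ carries the set $\{\,y_n\ \mathrm{mod}\ \k^*\,\}_{n\in\bbZ}$ to itself, respecting the adjacency of consecutive clusters dictated by \eqref{fr}. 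Establishing this rigidity, that an automorphism can do no more than reshuffle the cluster variables, is the step I expect to be the main obstacle; it yields the upper bound on $\Aut(X)$, the lower bound being provided by the explicit automorphisms already exhibited.

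The induced action on the set of cluster variables is a symmetry of the rank-$2$ cluster complex --- a cyclically ordered $(h+2)$-gon when $ab\le3$ and a two-sided infinite line when $ab\ge4$ --- preserving its adjacency, so it is a reflection or translation of the index $n$. The reflections are realized by the maps $\sigma_p\colon y_n\mapsto y_{2p-n}$, and since $\sigma_p\sigma_q$ shifts the index by the even amount $2(p-q)$, the symmetries induced by $\Aut(X)$ that preserve the parity of $n$ form exactly $\langle\sigma_2,\sigma_3\rangle$, comprising the integer reflections and the even translations; this gives $\langle\sigma_2,\sigma_3\rangle\simeq D_{10},D_6,D_8$ or $D_\infty$, of order $h+2$ in the even-period cases $(2,1)$ and $(3,1)$. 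The kernel of the action consists of the automorphisms fixing every $y_n$ up to scalar; as $X$ has only constant units (the three boundary classes being independent in $\mathrm{Pic}(\bar X)$), such an automorphism scales the coordinates, $(y_1,y_2,y_3,y_4)\mapsto(\nu^{-1}y_1,\mu y_2,\nu y_3,\mu^{-1}y_4)$, and the constant terms in \eqref{fr} force $\mu^a=\nu^b=1$, identifying the kernel with $\mu_{a,b}$. Thus $\Aut(X)$ is the extension $\langle\sigma_2,\sigma_3\rangle\ltimes\mu_{a,b}$, split by the explicit $\sigma_p$, the dihedral group acting on $\mu_{a,b}$ by permuting coordinates; whether this action is trivial distinguishes the direct product $D_6\times\mu_{2,1}$ from the genuine semidirect products.

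Finally, the two special sides of the boundary triangle have equal self-intersection $1-a=1-b$ precisely when $a=b$, and only then does $D$ admit the reflection interchanging them; at the level of indices this is the parity-reversing symmetry exchanging the two relations of \eqref{fr}, which is compatible with the ring structure exactly when their exponents coincide. This extra symmetry is realized by $\sigma_{5/2}\colon(y_1,y_2,y_3,y_4)\mapsto(y_4,y_3,y_2,y_1)$, a reflection about a half-integer point which, in infinite type, does not lie in $\langle\sigma_2,\sigma_3\rangle$ (it supplies the missing odd translations); it generates the order-two group $H_{a,a}$ and produces the case $(\langle\sigma_2,\sigma_3\rangle\ltimes\mu_{a,a})\rtimes H_{a,a}$ for $a=b\ge2$. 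For $(a,b)=(1,1)$ the group $\mu_{1,1}$ is trivial and the pentagon has odd period, so $\sigma_{5/2}$ already coincides with an element of $\langle\sigma_2,\sigma_3\rangle\simeq D_{10}$ and no separate factor appears. Assembling the four computations gives the five cases of the statement, and the descent from $\bar\k$ to $\k$ completes the proof.
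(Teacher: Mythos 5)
Your setup (the compactification of $X(a,b)$ obtained by blowing up the $a+b$ points on the coordinate lines of $\PP^2$, the anticanonical boundary triangle of type $(1-a,1-b,1)$, descent from $\bar{\k}$ to $\k$ via the integrality of the formulas) matches the paper's, and \emph{granted} your rigidity claim, the combinatorial endgame — action on the cluster complex, kernel equal to $\mu_{a,b}$ via the relations \eqref{fr}, parity analysis separating $a=b$ from $a\ne b$, and the odd period explaining why $\sigma_{5/2}$ is redundant for $(1,1)$ — is essentially sound. But that rigidity claim \emph{is} the mathematical content of the theorem, and you leave it unproven, saying yourself it is ``the step I expect to be the main obstacle.'' Asserting that the cluster variables can be characterized intrinsically ``as the internal negative curves that appear across the various smooth log Calabi--Yau compactifications obtained by iterated mutation,'' and that consequently every automorphism permutes the $y_n$ up to scalar, is a hope, not an argument: nothing you write excludes an automorphism of $X$ whose extension to $\bar{X}$ behaves badly along the boundary, and making ``iterated mutation'' precise enough to exclude this is exactly the hard part. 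The paper supplies it as Proposition~\ref{Prop:Ngons}: after arranging the boundary into a \emph{standard} $n$-gon (two $(0)$-curves meeting, all other components of self-intersection $\le -2$; note your triangle is not standard, so preliminary modifications to the square $S(a,b)$ or triangle $T(a,1)$ are required), every birational map of pairs factors into isomorphisms of pairs and fibered modifications. The proof is a minimal-resolution analysis (the boundary pulls back to a cycle, each of $\pi,\eta$ contracts a unique $(-1)$-curve, the resolution is a tower), and the factors are then identified explicitly: fibered modifications give $\sigma_2,\sigma_3$ (Examples~\ref{Exa:Fibsigma3} and~\ref{Exa:Fibsigma2ab}), isomorphisms of pairs give $\mu_{a,b}$, $\sigma_3$, $H_{a,b}$ (Lemmas~\ref{Lem:AutoBoundA1} and~\ref{Lem:AutoBoundAB}). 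Without a proof of this decomposition, or of your equivalent rigidity statement, your upper bound on $\Aut(X(a,b))$ is unsupported.

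A second gap: your uniform plan silently assumes rigidity also in the finite-type cases $(a,b)=(1,1),(2,1),(3,1)$, where the boundary \emph{cannot} be made standard (there are too few components of self-intersection $\le -2$), so no decomposition theorem of this kind is available. The paper treats these separately: Lemma~\ref{Lem:ExtXa1} shows every automorphism extends biregularly to the special compactification $Y(a,1)$, and the groups are then pinned down using del Pezzo geometry — for $(1,1)$, the bound $\Aut(Y(1,1))\subset\frakS_5$ and the computation that the stabilizer of a pentagon in the Petersen graph is $D_{10}$; analogous arguments in degrees $6$ and $4$ for $(2,1)$ and $(3,1)$. Your proposal offers no substitute for this step either, so even the finite cases remain open under your approach.
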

 Note that $\mu_{a,b}$ is isomorphic to $\mathbb{Z}/a\mathbb{Z}\times \mathbb{Z}/b\mathbb{Z}$ if $\k$ is algebraically closed, but is smaller in general. The group $\mu_{a,b}$ is the diagonalizable commutative algebraic group with the group of characters isomorphic to the abelian group $N$ corresponding to the seed skew-symmetrizable matrix  defining a cluster algebra. It is always a part of its  automorphism group and corresponds to its grading by the group $N$.\footnote{We owe this remark to Greg Muller.}
 
 The proof of Theorem~\ref{TheThm} is given by five propositions. More precisely, Proposition~\ref{Prop:Xa14} gives the cases $X(a,1)$, $a\ge 4$, Propositions~\ref{A2},~\ref{B2},~\ref{G2} give respectively the cases $A_2,B_2,G_2$, which correspond to $X(1,1),X(2,1),X(3,1)$, and the general case $X(a,b)$ with $a,b\ge 2$ is done in Proposition~\ref{GenCase}. 
 
 An automorphism of the $\bbZ$-algebra $\calC(a,b)$ is called a  \emph{cluster automorphism} if it sends each cluster to a cluster (see \cite{Assem}). Examples of such automorphisms are the automorphisms $\sigma_p$.  It follows from Theorem~\ref{TheThm} that the group of cluster  automorphisms of $\calC(a,b)$ is generated by 
 $\sigma_2$ and $\sigma_3$, except when $a=b\not=1$ in which case $\sigma_{5/2}\in H_{a,a}$ is a cluster automorphism not generated by $\sigma_2,\sigma_3$. In the case $(a,b) = (1,1)$, we have $\sigma_{5/2}=\sigma_2\sigma_3\sigma_2\sigma_3\sigma_2$ (see Remark~\ref{Rem:X11sigma52}).

 \medskip
 We thank Sergey Fomin and Greg Muller for coaching the second  author in the rudiments of the theory of cluster algebras. Thanks also to the referees for their constructive remarks that helped to improve the text.

\section{Compactifications with a pentagon}
In the sequel, all algebraic varieties are defined over a field $\k$ of characteristic zero or of characteristic $p$ with $ab\not\equiv 0 \pmod{p}$.

\begin{proposition}\label{Prop:TriangleBarX} The surface $X(a,b)$  admits a smooth compactification $\bar{X}(a,b)$ that is isomorphic to the blow-up of $\bbP^2$ at the points $$\{[1:0:\xi]\mid \xi^b+1 = 0\} \mbox{ and } \{[1:\lambda:0]\mid \lambda^a+1 = 0\}.$$ The boundary is the strict transform of the union of the coordinate lines described by the following picture $($where the numbers indicate the self-intersections of the irreducible components of the boundary$)$.

\begin{center}
\xy (-50,10)*{};
(0,0)*{};(40,0)*{}**\dir{-};(2,-3)*{};(22,18)*{}**\dir{-};(38,-3)*{};(17,18)*{}**\dir{-};
(20,-3)*{1};(5,8)*{1-b};(35,8)*{1-a};
(16,7)*{E_2};(25,7)*{E_3};(20,2)*{E_5};
\endxy
\end{center}
Moreover, the boundary divisor  is an anti-canonical divisor.
\end{proposition}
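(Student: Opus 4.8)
The plan is to realize $X(a,b)$ as the complement of three curves in the blow-up of $\bbP^2$, by making explicit the birational identification coming from the two defining relations, and then to read off self-intersections and compute the canonical class.

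First I solve the relations $y_1y_3=y_2^a+1$ and $y_2y_4=y_3^b+1$ for $y_1$ and $y_4$. The assignment $(y_1,y_2,y_3,y_4)\mapsto[1:y_2:y_3]$ defines a morphism $\phi\colon X(a,b)\to\bbP^2$ landing in the affine chart $U_0=\{x_0\neq0\}$ with coordinates $(y_2,y_3)$, whose inverse over the locus $\{y_2y_3\neq0\}$ is $(y_2,y_3)\mapsto\bigl((y_2^a+1)/y_3,\,y_2,\,y_3,\,(y_3^b+1)/y_2\bigr)$. Over the two coordinate lines $L_1=\{y_2=0\}$ and $L_2=\{y_3=0\}$ the map $\phi$ contracts curves: on $X(a,b)$ the locus $y_3=0$ forces $y_2^a=-1$ and $y_4=1/y_2$ while $y_1$ stays free, so it is a disjoint union of $a$ affine lines, one over each point $P_\lambda=[1:\lambda:0]$ with $\lambda^a=-1$; symmetrically $y_2=0$ gives $b$ affine lines contracted to the points $Q_\xi=[1:0:\xi]$ with $\xi^b=-1$ (the locus $y_2=y_3=0$ is empty on $X(a,b)$). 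These $a+b$ points are exactly the centres of the blow-up $\pi\colon Y\to\bbP^2$ in the statement.

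Next I show $X(a,b)\cong W:=Y\setminus(\tilde L_0\cup\tilde L_1\cup\tilde L_2)$, where $L_0=\{x_0=0\}$ and tildes denote strict transforms. The rational inverse above has indeterminacy only along $L_1\cup L_2$, and at each $P_\lambda$ both the numerator $y_2^a+1$ and $y_3$ vanish to first order (since $\lambda^a=-1$ gives $a\lambda^{a-1}\neq0$), so a single blow-up resolves it: in the chart $y_2-\lambda=m\,y_3$ one computes $y_1=a\lambda^{a-1}m+O(y_3)$, which is regular, and the exceptional divisor $C_\lambda$ maps isomorphically, away from its intersection with $\tilde L_2$, onto the contracted line of $X(a,b)$; the analogous statement holds at each $Q_\xi$. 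Since all remaining poles lie on $\tilde L_1\cup\tilde L_2$ and $\tilde L_0=\pi^{-1}(L_0)$ has been removed, the map extends to a morphism $W\to X(a,b)$ which is bijective (the interior $\{y_2y_3\neq0\}$, the exceptional divisors minus a point, and the coordinate-line loci of $X(a,b)$ match up piecewise). As $X(a,b)$ is smooth---its Jacobian has rank $2$ along $y_2y_3=0$---Zariski's Main Theorem identifies this birational bijection with an isomorphism. The boundary $Y\setminus W=\tilde L_0\cup\tilde L_1\cup\tilde L_2$ is thus the strict transform of the coordinate triangle; its vertices are the coordinate points, which are not blown up, so the triangle shape persists, and since $b$ of the centres lie on $L_1$, $a$ on $L_2$ and none on $L_0$, the self-intersections are $1-b$, $1-a$ and $1$, matching $E_2,E_3,E_5$.

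Finally, the anticanonical statement is formal: $-K_{\bbP^2}=L_0+L_1+L_2$, and $K_Y=\pi^*K_{\bbP^2}+\sum_iE_i$ gives
$$-K_Y=\pi^*(L_0+L_1+L_2)-\sum_iE_i=\tilde L_0+\tilde L_1+\tilde L_2,$$
because $\pi^*L_1=\tilde L_1+\sum_\xi D_\xi$, $\pi^*L_2=\tilde L_2+\sum_\lambda C_\lambda$ and $\pi^*L_0=\tilde L_0$ absorb exactly the exceptional divisors. Hence the boundary is anticanonical. The main obstacle is the middle step: verifying that the birational map extends to an isomorphism onto the complement of the three curves, i.e. the local resolution at the $a+b$ centres and the bookkeeping of which points of the exceptional divisors survive in $W$.
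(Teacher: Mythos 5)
Your proposal is correct and follows essentially the same route as the paper: project to the $(y_2,y_3)$-plane, observe that the projection is an isomorphism off the coordinate axes with affine-line fibers over the $a+b$ special points, blow up those points, identify $X(a,b)$ with the complement of the strict transforms of the coordinate triangle, and read off self-intersections and the anticanonical class. The only difference is how the key isomorphism is verified: the paper exhibits it globally by writing the blow-up over $\{y_2\neq 0\}$ as the incidence variety of the ideal $(y_2^a+1,y_3)$, whereas you resolve the inverse rational map in local charts at each centre and then invoke bijectivity plus Zariski's Main Theorem (using smoothness of $X(a,b)$) --- both verifications are sound.
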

\begin{remark}
In the above proposition, the points are taken in a finite Galois extension $K$ of $\k$ (this works because $ab\not=0$ in $\k$). The blow-up map  is defined over $\k$, because the Galois group preserves the set of blown-up points. Also, each of the irreducible components of the boundary  is defined over $\k$. Note that the choice of the names of the curves here could seem strange to the reader, but it motivated by the sequel (see Corollary~\ref{Cor:Pentagon}).
\end{remark}

\begin{proof} Consider the projection map
$$\pi:X(a,b)\to \bbA^2, \quad (y_1,y_2,y_3,y_4)\mapsto (y_2,y_3).$$
The preimage of a point $(y_2,y_3)$ corresponds to points $(y_1,y_2,y_3,y_4)$ where 
$y_1y_3=y_2^a+1$ and $y_2y_4=y_3^b+1$. In particular $\pi$ restricts to an isomorphism $U\to V$, where $U$ and $V$ are the open subsets of $X(a,b)$ and $\A^2=\Spec(\k[y_2,y_3])$ given by $y_2y_3\not=0$. However, each point of the set 
$$\Delta=\{(0,\xi)\mid \xi^b+1 = 0\} \cup \{(\lambda,0)\mid \lambda^a+1 = 0\}\subset \A^2$$ 
has a preimage which is isomorphic to an affine line. Let $\eta\colon Z\to \A^2$ be the blow-up of $\bbA^2$ at points from $\Delta$. It remains to show that $\varphi=\eta^{-1}\circ \pi$ is an open embedding of $X(a,b)$ into $Z$, whose restriction is an isomorphism $X(a,b)\to Z\setminus (E_2\cup E_3)$, where $E_2$, $E_3$ are respectively the strict transforms of the lines of equation $y_2=0$ and $y_3=0$.

We first restrict ourselves to the open subsets $U_2\subset X(a,b)$ and $V_2\subset \A^2$ where $y_2\not=0$. The restriction of $\eta$ is then the blow-up of the ideal $(y_2^a+1,y_3)$, that we can write as
$$\eta^{-1}(V_2)=\{((y_2,y_3),[u:v])\in V_2\times \p^1\mid y_3 v=(y_2^a+1)u\},$$ where $\eta$ corresponds to the projection on the first factor. The map $\eta^{-1}\circ \pi$ sends thus $(y_1,y_2,y_3,y_4)$ onto $((y_2,y_3), [y_1:1])$. As the curve $v=0$ corresponds to $E_3$, we obtain an isomorphism $U_2\to \eta^{-1}(V_2)\setminus E_3$. Exchanging coordinates $y_2$, $y_3$, we obtain the same result when $y_3\not=0$. Since $(y_2,y_3)\not=(0,0)$ on $X(a,b)$, this gives the result.

 Since the anti-canonical class of $\bbP^2$ is 
represented by the the union of the coordinate lines, the strict transform of this divisor at its simple points is the anti-canonical divisor of the blow-up.
\end{proof}

\begin{corollary}\label{Cor:Pentagon} The surface $X(a,b)$ admits a smooth compactification $Z(a,b)$ with an anticanonical boundary described by the following picture

\begin{center}
\xy (-70,10)*{};
(-17,-3)*{};(-3,17)*{}**\dir{-};(-13,9)*{-a};(-7,5)*{E_3};(-8,15)*{};(15,8)*{}**\dir{-};(5,15)*{-1};(3,8)*{E_4};(12,12)*{};(12,-12)*{}**\dir{-};(16,0)*{-1};(9,0)*{E_5};(15,-8)*{};(-8,-15)*{}**\dir{-};(5,-15)*{-1};(3,-8)*{E_1};(-3,-17)*{};(-17,3)*{}**\dir{-};(-13,-9)*{-b};(-7,-6)*{E_2};\endxy
\end{center}
Moreover, $Z(a,b)$ is obtained by blowing-up the points $$\{[1:0:\xi]\mid \xi^b+1 = 0\} ,  \{[1:\lambda:0]\mid \lambda^a+1 = 0\}, [0:1:0], [0:0:1]$$
of $\mathbb{P}^2$.
\end{corollary}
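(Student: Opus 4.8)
The plan is to build $Z(a,b)$ directly from the compactification $\bar X(a,b)$ of Proposition~\ref{Prop:TriangleBarX} by blowing up the two remaining vertices of its boundary triangle. Writing $[x_0:x_1:x_2]$ for the coordinates on $\mathbb{P}^2$ with $y_2 = x_1/x_0$ and $y_3 = x_2/x_0$, the three boundary components of $\bar X(a,b)$ are the strict transforms $E_2$, $E_3$, $E_5$ of the lines $\{x_1=0\}$, $\{x_2=0\}$, $\{x_0=0\}$, with self-intersections $1-b$, $1-a$, $1$. Their pairwise intersections are the torus-fixed points $E_2\cap E_3 = [1:0:0]$, $E_3\cap E_5 = [0:1:0]$ and $E_2\cap E_5 = [0:0:1]$, none of which was a center in forming $\bar X(a,b)$ (the $a+b$ earlier centers all have $x_0=1$). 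I would let $Z(a,b)\to \bar X(a,b)$ be the blow-up of the two points $[0:1:0]$ and $[0:0:1]$, writing $E_4$ and $E_1$ for the respective exceptional curves. Composing with $\bar X(a,b)\to \mathbb{P}^2$ then exhibits $Z(a,b)$ as the blow-up of $\mathbb{P}^2$ at the $a+b+2$ distinct points listed in the statement.

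Next I would verify that $Z(a,b)$ is still a compactification of $X(a,b)$. Since the two new centers $[0:1:0]$ and $[0:0:1]$ lie on the boundary triangle $E_2\cup E_3\cup E_5 = \bar X(a,b)\setminus X(a,b)$, the morphism $Z(a,b)\to \bar X(a,b)$ is an isomorphism over the open set $X(a,b)$. Hence $X(a,b)$ embeds as an open subset of the smooth projective surface $Z(a,b)$, whose boundary is the total transform of the triangle, namely $E_1\cup E_2\cup E_3\cup E_4\cup E_5$.

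The shape of this boundary is then pure bookkeeping. Both new centers lie on $E_5$, so its self-intersection drops by $2$, from $1$ to $-1$; the point $[0:1:0]$ lies on $E_3$ and $[0:0:1]$ on $E_2$, so these become $(1-a)-1=-a$ and $(1-b)-1=-b$; and $E_1^2=E_4^2=-1$. Blowing up the node $E_3\cap E_5$ separates $E_3$ from $E_5$ and inserts $E_4$ meeting both, and likewise $E_1$ is inserted between $E_2$ and $E_5$, while the untouched node $E_2\cap E_3$ survives. This produces the cyclic chain $E_2 - E_3 - E_4 - E_5 - E_1 - E_2$, exactly the pentagon of the picture.

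Finally, for the anticanonical claim I would pull back. With $\eta\colon Z(a,b)\to \mathbb{P}^2$ the composite and $-K_{\mathbb{P}^2}$ represented by the union of the three coordinate lines, one has $-K_{Z(a,b)} = \eta^*(-K_{\mathbb{P}^2}) - \sum_i E_i$ summed over all exceptional curves. In this expression the $a+b$ exceptionals over the smooth boundary points cancel completely, while the two nodal exceptionals $E_1,E_4$, which appear with multiplicity $2$ in $\eta^*(-K_{\mathbb{P}^2})$, survive with multiplicity $1$, leaving precisely $E_1+E_2+E_3+E_4+E_5$; alternatively one invokes that replacing an anticanonical cycle by its strict transform plus the new exceptional at one of its nodes is again anticanonical, starting from the triangle, which is anticanonical by Proposition~\ref{Prop:TriangleBarX}. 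The only step requiring genuine care is the second paragraph: one must confirm that the two extra centers are distinct boundary nodes, disjoint from $X(a,b)$ and from the earlier $a+b$ centers, so that enlarging the boundary leaves the compactified surface unchanged; the remaining self-intersection and canonical computations are routine intersection theory.
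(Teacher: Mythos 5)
Your proposal is correct and follows exactly the paper's route: the paper's proof is the single observation that one blows up the two nodes $E_5\cap E_3=[0:1:0]$ and $E_5\cap E_2=[0:0:1]$ of the boundary triangle of $\bar{X}(a,b)$ from Proposition~\ref{Prop:TriangleBarX}. Your additional verifications (that the new centers avoid $X(a,b)$ and the earlier $a+b$ centers, the self-intersection bookkeeping, and the anticanonical pullback computation) are just the details the paper leaves implicit, and they are all accurate.
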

\begin{proof}It suffices to blow-up the two points $E_5\cap E_3$ and $E_5\cap E_2$ from the compactification of Proposition~\ref{Prop:TriangleBarX}, which correspond to $[0:1:0]$ and $[0:0:1]$.\end{proof}
\begin{remark} 
The boundary $B=Z(a,b)\setminus X(a,b)$ being anti-canonical, every curve $C\subset Z(a,b)$ that is not contained in $B$ intersects the anti-canonical divisor $-K$ non-negatively. If $C$ is an irreducible curve of $B$, then $C^2+CK=-2$, which implies that $C(-K)=C^2+2$. This shows that $Z(a,b)$ is a weak Del Pezzo surface $(-K_X$ is big and nef$)$ if and only if $a,b\le 2$.

\end{remark}

\begin{remark} If $a\ge 2$ and $b\ge 2$, another natural normal compactification of the surface $X(a,b)$ is a complete intersection $Y(a,b)$ of two surfaces
$$x_1x_3-x_2^a-x_0^a = x_2x_4-x_3^b-x_0^b = 0$$
of degrees $a$ and $b$ in the weighted projective space $\bbP(1,a-1,1,1,b-1)$. The surface is singular at the point 
$[0,1,0,0,0]$ and $[0,0,0,0,1]$. Via the projection to the coordinates $x_0,x_2,x_3$, it  admits a birational map onto $\bbP^2$. In general, the compactification $Y(a,b)$ is related to the compactification $\bar{X}(a,b)$ in a rather complicated way by a sequence of blow-ups at singular points and then blow-downs. For example, when $a = b$, it is enough to resolve the singular points which are quotient singularities of type $\frac{1}{a}(1,1)$, and then blow down the strict transform of the line $x_0 = x_2 = x_3 = 0$. 
\end{remark}


\section{Birational maps between $n$-gons pairs}
 \begin{definition}
 Let $Y$ be a smooth projective surface, and let $n\ge 2$ be an integer. A \emph{$n$-gon} on $Y$ is a divisor $B=E_1+\dots+E_n$, where the $E_i$ are curves of $Y$ isomorphic to $\p^1$, such that $$E_i\cdot E_j=\left\{\begin{array}{ll}1& \mbox{ if }|i-j|\in \{1,n-1\},\\
 0 & \mbox{ if }|i-j|\in \{2,\dots,n-2\}. \end{array}\right.$$

The pair $(Y,B)$ will be called \emph{$n$-gon pair}. The type of $B$ (or of the pair $(Y,B))$ is the sequence $(E_1^2,\dots,E_n^2)$, which is a $n$-uple of integers, defined up to cyclic permutation and reversion.

 We say that the $n$-gon $B$ (or the pair $(Y,B))$ is \emph{standard} if $n\ge 3$, and if there is an ordering of the $E_i$ such that $(E_1)^2=(E_2)^2=0$, $E_1\cdot E_2=1$ and $(E_i)^2\le -2$ for $i\ge 3$.
 \end{definition} 
 \begin{example}
Corollary~\ref{Cor:Pentagon} gives examples of pentagons ($n$-gons with $n=5$) which are not standard. In the sequel, we will use this example to provide either quadrangles or triangles, in a standard form.
 \end{example}
 \begin{definition}
 Let $(Y,B)$ be a $n$-gon pair and $(Y',B')$ be a $m$-gon pair. A birational map $f\colon Y\dasharrow Y'$ is called \emph{birational map of pairs} if it restricts to an isomorphism $Y\setminus B\to Y'\setminus B'$. If the map $f$ is regular (resp. biregular), we will moreover say that $f$ is a birational morphism of pairs (respectively an isomorphism of pairs).
 \end{definition}

 \begin{example}\label{Exa:Ellink}
 Let $(Y,B)$ be a $n$-gon pair of type $(0,0,-a,-b_1,\dots,-b_{n-3})$.
 
  Blowing-up the intersection point of the first two curves and contracting the strict transform of the second curve, we obtain a $n$-gon of type $(-1,0,-a+1,-b_1,\dots,-b_{n-3})$. 
  
  Blowing-up again the intersection of the first two curves and contracting  the strict transform of the second curve, we obtain a $n$-gon of type $(-2,0,-a+2,-b_1,\dots,-b_{n-3})$.
  
  After $a-1$ steps, we obtain a birational map of pairs $(Y,B)\dasharrow (Y',B')$, where $(Y',B')$ is a $n$-gon pair of type $(-a,0,0,-b_1,\dots,-b_{n-3})$. 
 
 \begin{center}
\xy (0,15)*{};
 (7,4)*{};(2,11)*{}**\dir{--};(9,6)*{};
 (5,10)*{};(-10,5)*{}**\dir{-};(-3,10)*{-a};
 (-8,8)*{};(-8,-8)*{}**\dir{-};(-11,0)*{0};
 (-10,-5)*{};(5,-10)*{}**\dir{-};(-3,-10)*{0};
 (2,-11)*{};(7,-4)*{}**\dir{--};(-8,-6)*{\bullet};
 (9,0)*{\dasharrow};
 (32,4)*{};(27,11)*{}**\dir{--};(34,6)*{};
 (30,10)*{};(15,5)*{}**\dir{-};(20,10)*{-a\!+\!1};
 (17,8)*{};(17,-8)*{}**\dir{-};(14,0)*{0};
 (15,-5)*{};(30,-10)*{}**\dir{-};(22,-10)*{-1};
 (27,-11)*{};(32,-4)*{}**\dir{--};(17,-6)*{\bullet};
 (34,0)*{\dasharrow};
 (57,4)*{};(52,11)*{}**\dir{--};(59,6)*{};
 (55,10)*{};(40,5)*{}**\dir{-};(45,10)*{-a\!+\!2};
 (42,8)*{};(42,-8)*{}**\dir{-};(39,0)*{0};
 (40,-5)*{};(55,-10)*{}**\dir{-};(47,-10)*{-2};
 (52,-11)*{};(57,-4)*{}**\dir{--};(42,-6)*{\bullet};
 (59,0)*{\dasharrow};
 (84,0)*{\dasharrow};
 (73,0)*{\dots};
 (107,4)*{};(102,11)*{}**\dir{--};(109,6)*{};
 (105,10)*{};(90,5)*{}**\dir{-};(95,10)*{-1};
 (92,8)*{};(92,-8)*{}**\dir{-};(89,0)*{0};
 (90,-5)*{};(105,-10)*{}**\dir{-};(95,-10)*{-\!a\!+\!1};
 (102,-11)*{};(107,-4)*{}**\dir{--};
 (109,0)*{\dasharrow};
 (132,4)*{};(127,11)*{}**\dir{--};(134,6)*{};
 (130,10)*{};(115,5)*{}**\dir{-};(120,10)*{0};
 (117,8)*{};(117,-8)*{}**\dir{-};(114,0)*{0};
 (115,-5)*{};(130,-10)*{}**\dir{-};(122,-10)*{-a};
 (127,-11)*{};(132,-4)*{}**\dir{--};(117,-6)*{\bullet};
 \endxy
 \end{center}
 \end{example}
 \begin{definition}
 The birational maps $f\colon Y\dasharrow Y'$ as in Example~\ref{Exa:Ellink} will be called \emph{fibered modifications}.
 \end{definition}
 \begin{remark}
In Example~\ref{Exa:Ellink}, if $Y$ is a rational surface, then the linear system associated to the second $(0)$-curves\footnote{For any integer $m$, an $(m)$-curve is  a smooth rational curve with self-intersection equal to $m$.} of $B$ and $B'$ induce morphisms $\pi \colon Y\to \p^1$ and $\pi'\colon Y'\to \p^1$ with general fibre isomorphic to $\p^1$.  There exists then an automorphism $\theta$ of $\p^1$ such that the following diagramm commutes
$$\xymatrix{Y\ar[d]_{\pi}\ar@{-->}[r]^{f}& Y'\ar[d]^{\pi'}\\
\p^1\ar[r]^{\theta} &\p^1.
}$$ 

Note that the restriction of $\pi$ and $\pi'$ on the surfaces $Y\setminus B$ and  $Y'\setminus B'$ yield fibrations with general fibres isomorphic to $\A^1\setminus \{0\}$.
Hence, $f$ restricts to an isomorphism of  fibered surfaces $Y\setminus B\to Y'\setminus B'$. 
This explains why we call $f$ a fibered modification. \end{remark}
 \begin{proposition}\label{Prop:Ngons}
 Let $(Y,B)$ be a standard $n$-gon pair, and let $(Y',B')$ be a standard $m$-gon pair. 
 
Any birational map of pairs $f\colon (Y,B)\dasharrow (Y',B')$ decomposes into $$f=f_k\circ \dots \circ f_1,$$ where each $f_i\colon (Y_i,B_i)\dasharrow (Y_{i+1},B_{i+1})$ is either an isomorphism of pairs or a fibered modification $($and where $Y_0=Y,Y_k=Y'$, $B_0=B,B_k=B')$.

In particular, $m=n$.
 \end{proposition}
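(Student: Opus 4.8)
The plan is to resolve $f$ by a minimal common resolution, read off the boundary combinatorics, and then use the $\mathbb{P}^1$-fibrations coming from the standard form to peel off one fibered modification at a time.

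First I would pick a smooth surface $W$ with birational morphisms $p\colon W\to Y$ and $q\colon W\to Y'$ such that $f=q\circ p^{-1}$, chosen minimal in the sense that no $(-1)$-curve of $W$ is contracted by both $p$ and $q$. Because $f$ restricts to an isomorphism $Y\setminus B\to Y'\setminus B'$, minimality forces $p$ and $q$ to be isomorphisms over the two interiors: any curve contracted by $p$ to an interior point of $Y$ would, since $f$ is defined and biregular there, also be contracted by $q$, contradicting minimality. Hence every blown-up center lies over the boundary, and the reduced total transforms $D:=p^{-1}(B)_{\mathrm{red}}$ and $q^{-1}(B')_{\mathrm{red}}$ have the same support, a single connected curve $D\subset W$ obtained from the cycle $B$ by successive blow-ups of boundary points. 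Such a $D$ is a cycle of smooth rational curves with finitely many trees attached, and one recovers the $n$-gon $B$ by contracting a suitable set of $(-1)$-curves via $p$, and the $m$-gon $B'$ by contracting another such set via $q$.

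Next I would bring in the standard hypothesis. A $0$-curve of $B$, say $E_2$, defines a $\mathbb{P}^1$-fibration $\pi\colon Y\to\mathbb{P}^1$ in which $E_2$ is a fibre, the two neighbours $E_1,E_3$ are sections, and the remaining components $E_4,\dots,E_n$ sit inside a single degenerate fibre; on the interior this is exactly the $\mathbb{G}_m$-fibration recorded after Example~\ref{Exa:Ellink}, and likewise for $Y'$. I would then run an induction on the length $N$ of the resolution (say $N=\rho(W)-\rho(Y)$, the number of blow-ups). If $N=0$, then $W=Y=Y'$ and $f$ is biregular; since it identifies the interiors it sends $B$ to $B'$, so it is an isomorphism of pairs. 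If $N>0$, I would use the fibration $\pi$ and the configuration of $D$ in $W$ to locate a boundary node at which $p$ is nontrivial, and perform there the elementary operation of Example~\ref{Exa:Ellink}, namely blow up the node and contract the fibre component, repeated until a new standard $n$-gon is reached. This produces a fibered modification $g\colon(Y,B)\dashrightarrow(Y_1,B_1)$ between standard $n$-gon pairs for which $f\circ g^{-1}$ has a strictly shorter resolution; applying the inductive hypothesis to $f\circ g^{-1}$ and composing with $g$ gives the desired factorization. Since both isomorphisms of pairs and fibered modifications preserve the number of sides, the final assertion $m=n$ is immediate.

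The main obstacle is the inductive step: one has to guarantee that the operation extracted at the chosen node is a genuine fibered modification, i.e. that the blow-ups and contractions can be organized so as to keep precisely two adjacent $0$-curves while every other self-intersection stays $\le-2$, so that standardness and the side count are preserved at each stage. Concretely this amounts to a careful bookkeeping of the self-intersection numbers along the degenerate fibre, together with the verification that, after composing with an isomorphism of pairs if necessary to align the two fibrations, the chosen node really does lie over a nontrivial part of the resolution and that the operation strictly decreases $N$. The negativity built into the standard form is what rules out any other kind of elementary link and thereby pins the decomposition down to isomorphisms of pairs and fibered modifications.
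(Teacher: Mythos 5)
Your strategy is the same as the paper's --- take a minimal resolution of $f$, then run an induction that peels off fibered modifications --- but the step you set aside as ``the main obstacle'' is precisely the content of the proof, and nothing in your write-up supplies it. Two assertions carry the whole argument, and neither is justified in your proposal: (i) $f$ has a \emph{unique} proper indeterminacy point, and it is the node $E_1\cap E_2$ where $E_1,E_2$ are the two $(0)$-curves of $B$; (ii) the minimal resolution is a tower (each blown-up centre is infinitely near the previous one), so that blowing up this node and contracting the $(0)$-curve strictly decreases the number of base-points, and after the forced number of such steps the accumulated map is exactly a fibered modification. The paper proves (i) and (ii) by showing that each of the two morphisms $p,q$ of the minimal resolution contracts \emph{exactly one} $(-1)$-curve: if, say, $q$ contracted two, then by minimality and standardness these would be the strict transforms of the two $(0)$-curves of $B$; hence $p$ would have blown up $E_1\cap E_2$, and $f$ would send the exceptional curve of that point onto a curve of self-intersection $\ge 1$ in $B'$ --- impossible, since a standard $n$-gon contains no curve of self-intersection $\ge 1$. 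This is exactly where standardness enters; your closing remark that ``the negativity built into the standard form is what rules out any other kind of elementary link'' gestures at this but proves nothing.

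Without (i) and (ii) your induction does not run. You ``locate a boundary node at which $p$ is nontrivial,'' but if that node is not the intersection of the two $(0)$-curves, the operation of Example~\ref{Exa:Ellink} is not available there (a fibered modification, by definition, starts its blow-ups at that particular node), and you give no argument that such a node cannot occur. Even at the correct node, you give no reason why $f\circ g^{-1}$ has a strictly shorter resolution than $f$: that requires knowing that the centres blown up by $g$ coincide with the first centres of the resolution of $f$, which is again the tower property (ii). A smaller but related omission: you allow the boundary $D$ on the resolution to be ``a cycle with trees attached,'' whereas minimality forces $D$ to be a cycle (a contracted tree would contain a $(-1)$-curve contracted by both $p$ and $q$); this is needed to know that the indeterminacy points of $f$ are nodes of $B$ at all, which is the starting point of (i).
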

 \begin{proof}We can assume that $f$ is not an isomorphism.
 Let us take a minimal resolution 
$$\xymatrix{& Z\ar[ld]_{\pi}\ar[rd]^{\eta}\\
Y\ar@{-->}[rr]^{f} &&Y' 
}$$
of $f$. The fact that $f$ is a birational map of pairs $(X,B)\dasharrow (X',B')$ implies that the base-points of $f$ and $f^{-1}$ are in $B$ and $B'$ and that the restrictions of $\pi$ and $\eta$ yield isomorphisms $Z\setminus B_Z\to Y\setminus B$ and $Z\setminus B_Z\to X'\setminus B'$, for some divisor $B_Z$ on $Z$. Since $B$ and $B'$ are cycles, there is one cycle in $B_Z$, plus a priori some branches, which are then contracted by $\eta$ and $\pi$. By the minimality condition, this implies that $B_Z$ is in fact a cycle. In particular, the indeterminacy points of $f$ and $f^{-1}$ are singular points of $B$ and $B'$ respectively.

Let us  observe that $\eta$ contracts exactly one $(-1)$-curve. Firstly, the map $\eta$ is not an isomorphism because $f$ is not an isomorphism and because there is no $(-1)$-curve on $B'$. Secondly, if $\eta$ contracts at least two $(-1)$-curves, these are the strict transforms of the two $(0)$-curves of $B$. Hence the intersection point of these curves is blown-up by $\pi$ and the exceptional divisor of the point is sent by $f$ onto a curve of self-intersection $\ge 1$ of $B'$. However, $B'$ does not contain such curves. 

The same argument for $f^{-1}$ implies that $\pi$ also contracts exactly one $(-1)$-curve.
Hence, $f$ has a unique proper indeterminacy point $q$, and $\pi$ is a \emph{tower-resolution}, namely a sequence of blow-ups such that each point blown-up belongs to the exceptional curve of the previous point. In other words, $\pi$ is the blow-up of a chain of infinitely near points $x_r\succ x_{r-1}\succ\cdots\succ x_1 = q$. The $(-1)$-curve contracted by $\eta$ is the strict transform of one $(0)$-curve $E$ of $B$. Hence, $q$ is a singular point of $B$, lying on $E$. Note that $q$ is the intersection point of $E$ with the other $(0)$-curve $F$, since otherwise $F$ would be sent by $f$ onto a curve of self-intersection $\ge 1$. 

We denote by $(0,0,-a,-b_1,\dots,-b_{n-3})$ the type of $B$, where $F$ and $E$ correspond to the first and second curve respectively.
Blowing-up $q$ and contracting the strict transform of $E$, we obtain a birational map $\theta_1\colon (Y,B)\dasharrow (Y_{1},B_1)$, where $B_{1}$ is a $n$-gon of type 
 $(-1,0,-a+1,-b_1,\dots,-b_{n-3})$. The map $\varphi_1=f\circ \theta_1^{-1}\colon (Y_1,B_1)\dasharrow (Y',B')$ has one indeterminacy point less than $f$ (including in the counting all infinitely near points). Moreover, the fact that the minimal resolution $\pi$ of $f$ is a tower-resolution implies that $\varphi_{1}$ has a unique proper (i.e. not infinitely near) indeterminacy point $q_1$, which is the intersection of the first two curves. We write $\pi_{1}\colon Z_{1}\to Y_{1}$ the minimal resolution of $\varphi_{1}$, which is a again a tower-resolution, and denote by $\eta_{1}\colon Z_{1}\to Y'$ the birational morphism $\varphi_{1}\circ \pi_{1}$.
 
 Since $B_{1}$ contains exactly one curve of self-intersection $\ge -1$, $\eta_{1}$ contracts only one $(-1)$-curve, which is the strict transform of the $(0)$-curve of $B_{1}$. Blowing-up $q_1$ and contracting the strict transform of the $(0)$-curve, we obtain a birational map $\theta_2\colon (Y_{1},B_1)\dasharrow (Y_{2},B_2)$, where $B_{2}$ is a $n$-gon of type 
 $(-2,0,-a+2,-b_1,\dots,-b_{n-3})$. After $a-1$ steps, we obtain a pair $(Y_{a-1},B_{a-1})$, where $B_{a-1}$ is a $n$-gon of type 
 $(-a+1,0,-1,-b_1,\dots,-b_{n-3})$ on a smooth projective surface $Y_{a-1}$, and the birational map $\varphi_{a-1}=f\circ(\theta_{a-1}\circ \dots \circ \theta_1)^{-1}$ has $a-1$ indeterminacy points less than $f$. The unique indeterminacy point of $\varphi_{a-1}$ is the intersection point $q_{a-1}$ of the first two curves, but now the unique $(-1)$-curve contracted by $\eta_{a-1}$ is the strict transform of either the second or the the third curve. Blowing-up $q_{a-1}$ and contracting anyway the strict transform of the $(0)$-curve, we obtain a birational map $\theta_{a}\colon Y_{a-1}\dasharrow Y_{a}$ such that $\varphi_{a}=\varphi_{a-1}\circ (\theta_a)^{-1}=f\circ(\theta_{a}\circ \dots \circ \theta_1)^{-1}\colon (Y_{a},B_a)\dasharrow (Y',B')$ has either $a-1$ or $a$ indeterminacy points less than $\varphi$. Since $\theta_a\circ \dots \circ \theta_1$ is a fibered modification, the result follows by induction on the number of indeterminacy points. \end{proof}
 
\section{Compactifications of $X(a,1)$ with a triangle}\label{SecTriangle}
 The case of $X(a,1)$ is a bit different from the general case of $X(a,b)$ with $a,b\ge 2$, since the curve $E_2\subset Z(a,1)$ has self-intersection $-1$. 
 Denote by $\eta\colon (Z(a,1),B_Z)\to (T(a,1),B_T)$ the birational morphism of pairs which contracts the curves $E_2$ and $E_4$. The boundary $B_T$ of $X(a,1)$ in $T(a,1)$ consists of a triangle $\eta(E_1)+\eta(E_3)+\eta(E_5)={\mathcal{E}}_1+{\mathcal{E}}_3+{\mathcal{E}}_5$ of type $(0,-(a-2),0)$.

  \begin{center}
\xy 
(-75,5)*{};
(-31,-2)*{};(-22,11)*{}**\dir{-};(-29,6)*{-a};(-25,10)*{};(-10,5)*{}**\dir{-};(-17,10)*{-1};(-12,8)*{};(-12,-8)*{}**\dir{-};(-9,0)*{-1};(-10,-5)*{};(-25,-10)*{}**\dir{-};(-17,-10)*{-1};(-22,-11)*{};(-31,2)*{}**\dir{-};(-29,-6)*{-1};
(-24,3)*{E_3};(-18,5)*{E_4};(-15,0)*{E_5};(-18,-5)*{E_1};(-24,-3)*{E_2};
(0,0)*{\longrightarrow};
(0,3)*{\eta};
(7,-2)*{};(28,10)*{}**\dir{-};(16,9)*{-(\!a-\!2\!)};(18.25,2)*{\mathcal{E}_3};(25,12)*{};(25,-12)*{}**\dir{-};(28,0)*{0};(23,0)*{\mathcal{E}_5};(28,-10)*{};(7,2)*{}**\dir{-};(16,-8)*{0};(18.5,-2.25)*{\mathcal{E}_1}; 
 \endxy
 \end{center}
 
If $a\ge 4$, the triangle is a standard triangle, so the automorphisms of $X(a,1)$ can be described with the help of Proposition~\ref{Prop:Ngons}. The special cases where $a\le 3$ will then be described separately. The following lemma allows us to view $T(a,1)$ as a blow-up of points in $\p^1\times \p^1$.

 \begin{lemma}\label{Lemma:BlowUpTriangle}
 The smooth projective surface $T(a,1)$ is the blow-up $\pi'\colon T(a,1)\to \p^1\times \p^1$ of the following $a$ points 
 $$\{([\xi :1],[\xi:1])\mid\xi^a+1=0\}$$
and the boundary $T(a,1)\setminus X(a,1)$ consists of the strict transform of the curves $\pi'(\mathcal{E}_1)=\p^1\times [0:1]$, $\pi'(\mathcal{E}_5)=[1:0]\times\p^1$ and of the diagonal $\pi'(\mathcal{E}_3)$.
\begin{center}
\xy 
(-50,0)*{};
 (21.5,-1.5)*{};(-1.5,21.5)*{}**\dir{-};(14,10)*{2};(14,14)*{\pi'(\mathcal{E}_3)};
 (0,-2)*{};(0,22)*{}**\dir{-};(-5,7)*{\pi'(\mathcal{E}_5)};(-3,12)*{0};
 (-2,0)*{};(22,0)*{}**\dir{-};(6,-3)*{0};(14,-3)*{\pi'(\mathcal{E}_1)};(12,8)*{\bullet};(14,6)*{\bullet};(6,14)*{\bullet};(7.5,11.5)*{\cdot};(8.5,10.5)*{\cdot};(9.5,9.5)*{\cdot};
 (26,10)*{\longleftarrow}; (26,13)*{\pi'};
 (61.5,-1.5)*{};(38.5,21.5)*{}**\dir{-};(59,10)*{-(a\!-\!2)};(50,14)*{{\mathcal{E}}_3};
 (40,-2)*{};(40,22)*{}**\dir{-};(37,8)*{{\mathcal{E}}_5};(37,12)*{0};
 (38,0)*{};(62,0)*{}**\dir{-};(48,-3)*{0};(52,-3)*{{\mathcal{E}}_1};
 \endxy
 \end{center}
Moreover, the restriction of $\pi'$ to the affine surface $X(a,1)$ is given by 
$$(y_1,y_2,y_3,y_4)\mapsto  ([y_2:1],[1:y_4]).$$ 
 \end{lemma}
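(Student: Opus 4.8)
The plan is to construct $\pi'$ from the two $(0)$-curves of the boundary triangle and then to compute its restriction to $X(a,1)$. By the discussion preceding the statement, $\mathcal{E}_1$ and $\mathcal{E}_5$ are smooth rational curves with $\mathcal{E}_1^2=\mathcal{E}_5^2=0$, $\mathcal{E}_1\cdot\mathcal{E}_5=1$, and $\mathcal{E}_3^2=-(a-2)$, and $T(a,1)$ is rational because it is dominated by $Z(a,1)$, a blow-up of $\bbP^2$. A smooth rational curve $C$ with $C^2=0$ on a rational surface is a fibre of a $\p^1$-fibration: it is nef, and $h^2(\mathcal{O}(C))=h^0(\mathcal{O}(K-C))=0$ (if $K-C$ were effective it would meet the nef class $C$ nonnegatively, yet $(K-C)\cdot C=-2<0$), so $h^0(\mathcal{O}(C))\ge 2$ by Riemann--Roch, and a nef self-intersection-zero class represented by a smooth curve gives a base-point-free pencil. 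Applying this to $\mathcal{E}_5$ and $\mathcal{E}_1$ yields fibrations $p_5,p_1\colon T(a,1)\to\p^1$ with fibres in $|\mathcal{E}_5|$ and $|\mathcal{E}_1|$ respectively.

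I then set $\pi':=(p_5,p_1)\colon T(a,1)\to\p^1\times\p^1$. One has $\pi'^*\mathcal{O}(1,1)=\mathcal{O}(\mathcal{E}_5+\mathcal{E}_1)$ with $(\mathcal{E}_5+\mathcal{E}_1)^2=2=\mathcal{O}(1,1)^2$; moreover $\mathcal{E}_5$ is a fibre of $p_5$ on which $p_1$ has degree $\mathcal{E}_1\cdot\mathcal{E}_5=1$, so $\pi'$ maps $\mathcal{E}_5$ isomorphically onto $[1:0]\times\p^1$ and is in particular dominant. Hence $\pi'$ is a birational morphism of smooth projective surfaces, that is, a composite of blow-downs, and $T(a,1)$ is a blow-up of $\p^1\times\p^1$. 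Comparing Picard numbers, $\rho(T(a,1))=\rho(Z(a,1))-2=(a+4)-2=a+2$, so exactly $a$ points are blown up.

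It remains to identify $p_5,p_1$ with the stated functions and to locate the blown-up points. In the model of Proposition~\ref{Prop:TriangleBarX}, $\bbA^2_{(y_2,y_3)}$ is embedded in $\bbP^2$ by $(y_2,y_3)\mapsto[1:y_2:y_3]$ and $y_4=(y_3+1)/y_2$. A local computation — at the point $[0:0:1]$ carrying $\mathcal{E}_1$, and along the line at infinity carrying $\mathcal{E}_5$ — shows that $y_2$ has a simple pole exactly along $\mathcal{E}_5$, that $y_4$ has a simple pole exactly along $\mathcal{E}_1$, and that $\mathcal{E}_3=\{y_3=0\}$. The level sets of $y_2$ (resp. $y_4$) are therefore the members of the free pencil $|\mathcal{E}_5|$ (resp. $|\mathcal{E}_1|$), so after fixing coordinates on the two factors $p_5=[y_2:1]$ and $p_1=[1:y_4]$, which is the asserted formula. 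Finally the elementary fibre analysis of $(y_1,y_2,y_3,y_4)\mapsto(y_2,y_4)$ shows that $\pi'$ is an isomorphism over $\{y_2y_4\neq1\}$ and contracts exactly the $a$ lines $L_\xi=\{y_2=\xi,\ y_3=0\}$ (with $\xi^a+1=0$ and $y_1$ free) to the diagonal points $([\xi:1],[1:1/\xi])=([\xi:1],[\xi:1])$. These are the $a$ blown-up points, and $\mathcal{E}_5,\mathcal{E}_1,\mathcal{E}_3$ map onto $[1:0]\times\p^1$, $\p^1\times[0:1]$, and the diagonal, as claimed.

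I expect the crux to be the polar-locus identification: verifying in the explicit $\bbP^2$-model that $y_2$ and $y_4$ acquire simple poles along the correct boundary curves and have no other poles, so that they generate the pencils $|\mathcal{E}_5|$ and $|\mathcal{E}_1|$ on the nose and not some multiple or twisted system. The remaining ingredients — the intersection-number count making $\pi'$ birational and the fibrewise description of the projection to $(y_2,y_4)$ — are routine.
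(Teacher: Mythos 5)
Your proposal is correct, but it takes a genuinely different route from the paper. The paper's proof is purely explicit: it recalls that $\pi\colon Z(a,1)\to\p^2$ is the blow-up of the $a+3$ points of Corollary~\ref{Cor:Pentagon}, and then writes down the birational map $\kappa\colon\p^2\dasharrow\p^1\times\p^1$, $[x_0:x_1:x_2]\dashmapsto([x_1:x_0],[x_1:x_0+x_2])$, which blows up $[0:0:1]$ and $[1:0:-1]$ and contracts the line $x_1=0=\pi(E_2)$. Since those two blow-ups are already effected inside $Z(a,1)$, and since the curves contracted by $\kappa\pi$ other than the exceptional curves over the points $[1:\lambda:0]$ are exactly $E_2$ and $E_4$, i.e.\ the curves contracted by $\eta$, the composite $\pi'=\kappa\pi\eta^{-1}$ is a morphism and is visibly the blow-up of the $a$ points $\kappa([1:\lambda:0])=([\lambda:1],[\lambda:1])$; the coordinate formula on $X(a,1)$ then drops out of the formula for $\kappa$ together with $y_2y_4=y_3+1$. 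You instead build $\pi'$ intrinsically from the two $(0)$-curves: Riemann--Roch gives the two base-point-free pencils, $(\mathcal{E}_1+\mathcal{E}_5)^2=2$ gives birationality, the Picard-rank count gives the number $a$ of centers, and the divisor and fibre computations with $y_2,y_4$ pin down the formula and the centers. Your approach is more conceptual — it explains why a morphism to $\p^1\times\p^1$ must exist, which is the same mechanism underlying Lemma~\ref{Lemma:BlowUpSquare} — while the paper's choice of $\kappa$ yields the blow-up structure and the coordinate formula simultaneously with almost no verification. Two small remarks on your write-up: mapping $\mathcal{E}_5$ isomorphically onto a vertical line does not by itself give dominance, but your intersection computation repairs this, since a non-dominant $\pi'$ would pull $\mathcal{O}(1,1)$ back to a class of square $0$ rather than $2$; and the polar-divisor claims you flag as the crux do check out, namely on $T(a,1)$ one finds $\mathrm{div}(y_2)=\eta(C)-\mathcal{E}_5$ and $\mathrm{div}(y_4)=\eta(L)-\mathcal{E}_1$, where $C$ is the exceptional curve over $[1:0:-1]$ and $L$ is the strict transform of the line $x_0+x_2=0$, so the level sets of $y_2$ and $y_4$ are indeed the members of $|\mathcal{E}_5|$ and $|\mathcal{E}_1|$.
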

 \begin{remark}
 As in Proposition~\ref{Prop:TriangleBarX}, the points blown-up belong to a finite Galois extension of $\k$ so not necessarily to $\k$, but the morphism $\pi'$ is in any case defined over $\k$.
 \end{remark}
 \begin{proof}
 Recall that the birational morphism $\pi\colon Z(a,1)\to \p^2$ of Corollary~\ref{Cor:Pentagon} is the blow-up of the $a+3$ points
 $$[1:0:-1] ,  \{[1:\lambda:0]\mid \lambda^a+1 = 0\}, [0:1:0], [0:0:1].$$
 The birational morphism $\eta\colon Z(a,1)\to T(a,1)$ contracts the curves $E_2$, $E_4$, which are respectively the strict transform of the second coordinate line $\pi(E_2)$ and the exceptional divisor of $[0:1:0]$. Hence, denoting by $\kappa\colon \p^2\dasharrow \p^1\times \p^1$ the blow-up of $[0:0:1],[1:0:-1]$ followed by the contraction of the strict transform of $\pi(E_2)$, the map $\pi'=\kappa\pi\eta^{-1}\colon T(a,1)\to \p^1\times \p^1$ is the blow-up of the $a$ points $\{\kappa([1:\lambda:0])\mid \lambda^a+1 = 0\}$.
 Explicitely, we can choose $\kappa$ to be given by
 $$[x_0:x_1:x_2]\dasharrow ([x_1:x_0],[x_1:x_0+x_2]),$$
 which implies that $\{\kappa([1:\xi:0])\mid \xi^a+1 = 0\}=\{([\xi :1],[\xi:1])\mid\xi^a+1=0\}$. The restriction of $\pi\colon Z(a,1)\to \p^2$ being $(y_1,y_2,y_3,y_4)\mapsto (1:y_2:y_3)$ (see the proof of Proposition~\ref{Prop:TriangleBarX}), the restriction of $\pi'$ to $X(a,1)$ is 
 $$(y_1,y_2,y_3,y_4)\mapsto \kappa([1:y_2:y_3])=([y_2:1],[y_2:y_3+1])=([y_2:1],[1:y_4]).$$
 \end{proof}
 
 \begin{lemma}\label{Lem:AutoBoundA1}
The action of the group $\Aut(T(a,1),B_T)$ of automorphisms of the pair $(T(a,1),B_T)$ on the set $\{\mathcal{E}_1,\mathcal{E}_3,\mathcal{E}_5\}$ induces a split exact sequence 
$$1\longrightarrow \mu_{a,1} \to \Aut(T(a,1),B_T)\to R_a\to 1,$$
where $\mu_{a,1}\simeq \{\mu \in \k^*\mid \mu^a=1\}$ acts on $X(a,1)$ via
 $$(y_1,y_2,y_3,y_4)\mapsto (y_1,\mu y_2,y_3,\mu ^{-1}y_4),$$
 and where
 
 $$R_a=\left\{\begin{array}{lll}\langle \sigma_2,\sigma_3\rangle & \simeq \frakS_3,&\mbox{ if } a=2,\\
 \langle \sigma_2\rangle & \simeq \mathbb{Z}/2\mathbb{Z},&\mbox{ if } a\not=2.\\
 \end{array}
 \right.$$
 \end{lemma}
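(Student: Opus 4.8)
The plan is to transport the whole question to $\p^1\times\p^1$ via Lemma~\ref{Lemma:BlowUpTriangle}. There $\mathcal{E}_1=\p^1\times[0:1]$ and $\mathcal{E}_5=[1:0]\times\p^1$ are fibres of the two projections, so $\mathcal{E}_1^2=\mathcal{E}_5^2=0$, while $\mathcal{E}_3$ is the strict transform of the diagonal, of self-intersection $2-a$ (the diagonal has self-intersection $2$ and contains all $a$ blown-up points). I would first compute the kernel of the action on $\{\mathcal{E}_1,\mathcal{E}_3,\mathcal{E}_5\}$. An automorphism of the pair fixing each $\mathcal{E}_i$ fixes in particular the two $(0)$-curves $\mathcal{E}_1,\mathcal{E}_5$, hence preserves (without exchanging) the two base-point-free pencils $|\mathcal{E}_1|$ and $|\mathcal{E}_5|$, which induce $\pr_2\circ\pi'$ and $\pr_1\circ\pi'$; since $\pi'$ is the product of these two maps, such an automorphism descends to a product automorphism $(\gamma,\gamma')$ of $\p^1\times\p^1$. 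Fixing $\mathcal{E}_1$ and $\mathcal{E}_5$ forces $\gamma'$ to fix $[0:1]$ and $\gamma$ to fix $[1:0]$; fixing the diagonal forces $\gamma=\gamma'$; and preserving the set $\{([\xi:1],[\xi:1])\mid\xi^a+1=0\}$ forces $\gamma$ to be multiplication by a scalar $\mu$ with $\mu^a=1$. Reading this back through $\pi'$ yields exactly the action $(y_1,y_2,y_3,y_4)\mapsto(y_1,\mu y_2,y_3,\mu^{-1}y_4)$, and a direct check that this preserves the defining relations and each boundary curve shows that the kernel is precisely $\mu_{a,1}$.

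Next I would identify the image $R_a$. When $a\ne2$ the self-intersection $\mathcal{E}_3^2=2-a$ differs from $\mathcal{E}_1^2=\mathcal{E}_5^2=0$, so every automorphism of the pair fixes $\mathcal{E}_3$ and at most exchanges $\mathcal{E}_1$ and $\mathcal{E}_5$; hence $R_a\subseteq\langle(\mathcal{E}_1\,\mathcal{E}_5)\rangle\cong\mathbb{Z}/2\mathbb{Z}$. This exchange is realised by the involution of $\p^1\times\p^1$ interchanging the two factors, which on the chart is $(y_2,y_4)\mapsto(y_4,y_2)$; it preserves the diagonal and the set of $a$ diagonal points, so it lifts to a biregular involution of $T(a,1)$ preserving $B_T$ (this is the cluster involution $y_n\mapsto y_{6-n}$). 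As it maps onto $R_a$ and squares to the identity, it proves surjectivity and splits the sequence, giving $\Aut(T(a,1),B_T)\cong\mu_{a,1}\rtimes\mathbb{Z}/2\mathbb{Z}$ for $a\ne2$.

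When $a=2$ all three boundary curves are $(0)$-curves and the anticanonical triangle satisfies $(-K)^2=6$, so $T(2,1)$ is a (possibly non-split) degree-$6$ del Pezzo surface on which $\mathcal{E}_1,\mathcal{E}_3,\mathcal{E}_5$ are special fibres of the three conic-bundle structures. There is now no self-intersection obstruction, so $R_2\subseteq\frakS_3$, and I would realise all of $\frakS_3$. Besides the factor exchange, the cluster involution $\sigma_2$, which on the chart is $(y_2,y_4)\mapsto\bigl(y_2,\frac{y_2+y_4}{y_2y_4-1}\bigr)$, now interchanges the two equal-self-intersection curves $\mathcal{E}_1$ and $\mathcal{E}_3$ while fixing $\mathcal{E}_5$; together the two involutions generate $\frakS_3$, splitting the sequence and giving $\Aut(T(2,1),B_T)\cong\mu_{2,1}\rtimes\frakS_3$.

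The step I expect to be the main obstacle is the realisation in the case $a=2$: the map $\sigma_2$ fails to be biregular on $T(a,1)$ for $a\ne2$—it would have to exchange $\mathcal{E}_1$ and $\mathcal{E}_3$, which then have different self-intersections, and is in fact a fibered modification in the sense of Example~\ref{Exa:Ellink}—so one must check carefully that for $a=2$ it does extend to an honest automorphism of $T(2,1)$. The cleanest way is either to resolve its indeterminacy and verify that its only base points are the two blown-up points (so that no further blow-up is needed), or, more structurally, to invoke the degree-$6$ del Pezzo description, where the transitive action of the automorphism group on the three conic-bundle structures furnishes the missing transposition directly. The remaining ingredients—the self-intersection bound, the descent used for the kernel, and the construction of the splitting by explicit involutions—are then routine.
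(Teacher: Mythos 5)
Your proposal is correct and follows essentially the same route as the paper's proof: descend kernel elements to $\p^1\times\p^1$ and identify them with $\mu_{a,1}$, use the self-intersections $(0,2-a,0)$ to pin down the image when $a\neq 2$, and realise the required transpositions by the same two explicit involutions of $\p^1\times\p^1$ (your chart formulas agree exactly with the maps the paper writes down, including the quadratic involution used for $a=2$). One slip to fix: the involution realising $\mathcal{E}_1\leftrightarrow\mathcal{E}_5$ is not the plain factor exchange $([u_1:u_2],[v_1:v_2])\mapsto([v_1:v_2],[u_1:u_2])$, since that map sends $\pi'(\mathcal{E}_1)=\p^1\times[0:1]$ to $[0:1]\times\p^1$, which is not a boundary curve; the correct map is the factor exchange composed with inversion on each factor, $([u_1:u_2],[v_1:v_2])\mapsto([v_2:v_1],[u_2:u_1])$, which is precisely what your chart formula $(y_2,y_4)\mapsto(y_4,y_2)$ extends to, so the rest of your argument is unaffected. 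Note also that your identification of this involution as $y_n\mapsto y_{6-n}$, i.e.\ $\sigma_3$, matches the paper's own proof, even though the lemma's statement labels the generator of $R_a$ as $\sigma_2$ for $a\neq 2$; that discrepancy is internal to the paper.
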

 \begin{proof}
 Denote by $\mu_{a,1}$ the kernel of the action of $\Aut(T(a,1),B_T)$ on the set $\{\mathcal{E}_1,\mathcal{E}_3,\mathcal{E}_5\}$. Let us observe that the set of $a$ curves contracted by $\pi'$ is invariant by $\mu_{a,1}$. Indeed, the image by $\mu_{a,1}$ of one of the curves is an irreducible curve, not intersecting $\mathcal{E}_1$ and $\mathcal{E}_5$. The image of this curve by $\pi'$ does not intersect the two fibres $\pi'(\mathcal{E}_1)=\p^1\times [0:1]$, $\pi'(\mathcal{E}_5)=[1:0]\times\p^1$, so it is a point.
 
 The group $\mu_{a,1}$ is then the lift of automorphisms of $\p^1\times\p^1$ which leave invariant the three curves $\pi'(\mathcal{E}_1)$, $\pi'(\mathcal{E}_3)$, $\pi'(\mathcal{E}_5)$ and which preserve the set $\{([\xi :1],[\xi:1])\mid\xi^a+1=0\}$. This group is isomorphic to $ \{\mu \in \k^*\mid \mu^a=1\}$, acts on  $\p^1\times \p^1$ via
$$([u_1:u_2],[v_1:v_2])\mapsto ([\mu u_1:u_2],[\mu v_1:v_2])$$
 and then on $X(a,1)=\Spec \k[y_1,y_2,y_3,y_4]/(y_1y_3-y_2^a\ -1,y_2y_4-y_3-1)$ via $$(y_1,y_2,y_3,y_4)\mapsto (y_1,\mu y_2,y_3,\mu ^{-1}y_4).$$
 It coincides then with the group $\mu_{a,1}$ already defined in the introduction.
 
The explicit description of $\pi'\colon T(a,1)\to \p^1\times \p^1$ given in Lemma~\ref{Lemma:BlowUpTriangle} shows that the automorphism 
$$([u_1:u_2],[v_1:v_2])\mapsto ([v_2:v_1],[u_2:u_1])$$
of $\p^1\times \p^1$ lifts to an automorphism of $T(a,1)$ which preserves the boundary, exchanging the two $(0)$-curves and preserving the $(-a)$-curve.
In affine coordinates, this gives the following automorphism of $X(a,1)\subset \A^4$
$$(y_1,y_2,y_3,y_4)\mapsto \left(\frac{y_4^a+1}{y_2y_4-1},y_4,y_3,y_2\right)=\left(y_1y_4^a-\sum_{i=0}^{a-1}(y_2y_4)^i,y_4,y_3,y_2\right),$$
which corresponds to the automorphism $\sigma_3$. If $a\not=2$, then this element generates the image of the action, since the three curves $\mathcal{E}_1,\mathcal{E}_3,\mathcal{E}_5$ have self-intersection $0,2-a,0$ respectively. 

If $a=2$, then $\pi'$ blows-up the two points $([\xi :1],[\xi:1])$, where $\xi^2+1=0$, which are the two indeterminacy points of the birational involution
$$([u_1:u_2],[v_1:v_2])\mapsto ([u_1:u_2],[u_1v_2-u_2v_1:u_1v_1+u_2v_2])$$
of $\p^1\times\p^1$. The lift of the involution gives an automorphism of $B(a,1)$, which fixes $\mathcal{E}_5$ and exchanges the two curves $\mathcal{E}_1$ and $\mathcal{E}_3$. This involution yields the automorphism of order $2$ of $X(2,1)$, given by 
$$\sigma_2\colon (y_1,y_2,y_3,y_4)\mapsto (y_3,y_2,y_1,y_1y_4-y_2).$$ \end{proof}
If $a\ge 4$, we can decompose any automorphism of $X(a,1)$ into a sequence of isomorphisms of pairs and fibered modifications (Proposition~\ref{Prop:Ngons}). A priori, the fibered modification could go from one pair to a different one, but we will show that in the case of $X(a,1)$, we can only consider fibered modifications $T(a,1)\dasharrow T(a,1)$, hence each of them can be seen as a unique automorphism of $X(a,1)$, up to automorphisms of the pair $X(a,1)\subset T(a,1)$ (and these latter automorphisms have been described in Lemma~\ref{Lem:AutoBoundA1}).

\begin{example}\label{Exa:Fibsigma3}
The following birational involution of $\p^1\times\p^1$
$$f\colon ([u_1:u_2],[v_1:v_2])\dashmapsto ([u_1:u_2],[u_2^{a-2}(u_1v_2-u_2v_1):u_1^{a-1}v_1 +u_2^{a-1}v_2])$$
is not defined only at $([1:0],[0:1])$ and at 
 $\{([\xi :1],[\xi:1])\mid\xi^a+1=0\}$. It follows from the explicit description that the lift of $f$ is a birational map $\hat{f}=(\pi')^{-1}f\pi'$ of $T(a,1)$ which restricts to an automorphism  of $T(a,1)\setminus {\mathcal{E}}_5$, and which exchanges $\mathcal{E}_1$ and $\mathcal{E}_3$. 
 
 The map $\hat{f}$ is therefore a fibered modification if $a\not=2$ and an isomorphism if $a=2$. Moreover, $\hat{f}$ restricts to an automorphism of $X(a,1)$, that we will show to be equal to $\sigma_2$.

To compute this, we use the map $X(a,1)\to \p^1\times \p^1$ given by $(y_1,y_2,y_3,y_4)\mapsto  ([y_2:1],[1:y_4])$. The composition with $f$ yields
$$
([y_2:1],[1:y_4])\dashmapsto ([y_2:1],[(y_2y_4-1):y_2^{a-1} +y_4])=([y_2:1],[1:y_1y_4-y_2^{a-1}]).$$
Hence, $y_3=y_2y_4-1$ is exchanged with $y_2(y_1y_4-y_2^{a-1})-1=y_1$. The involutive automorphism of $X(a,1)$ is thus given by 
$$\sigma_2\colon (y_1,y_2,y_3,y_4)\mapsto (y_3,y_2,y_1,y_1y_4-y_2^{a-1}).$$
\end{example}

\begin{proposition}\label{Prop:Xa14}
If $a\ge 4$, then $\Aut(X(a,1))=\mu_{a,1}\rtimes \langle \sigma_2,\sigma_3\rangle$. 
Moreover, $\langle \sigma_2,\sigma_3\rangle= \langle \sigma_2\rangle \star \langle \sigma_3\rangle= \mathbb{Z}/2\mathbb{Z} \star \mathbb{Z}/2\mathbb{Z}=\mathbb{Z}\rtimes \mathbb{Z}/2\mathbb{Z}$ is an infinite diedral group and $\mu_{a,1}$ is a finite cyclic group.
\end{proposition}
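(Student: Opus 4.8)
The plan is to realize every automorphism of $X(a,1)$ as a birational self-map of the triangle pair $(T(a,1),B_T)$ and then to peel it apart, using Proposition~\ref{Prop:Ngons}, into pieces that have already been identified. First I would note that since $X(a,1)=T(a,1)\setminus B_T$ and, for $a\ge 4$, $B_T$ is a \emph{standard} triangle of type $(0,-(a-2),0)$ (here $-(a-2)\le-2$), every $\phi\in\Aut(X(a,1))$ is by definition a birational map of pairs $(T(a,1),B_T)\dashrightarrow(T(a,1),B_T)$. Proposition~\ref{Prop:Ngons} then yields a factorization $\phi=f_k\circ\cdots\circ f_1$ in which each $f_i\colon(Y_i,B_i)\dashrightarrow(Y_{i+1},B_{i+1})$ is an isomorphism of pairs or a fibered modification, all the $(Y_i,B_i)$ being standard triangle pairs; since both kinds of map preserve the type, every $B_i$ has type $(0,0,-(a-2))$.

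The heart of the argument is to show, by induction on $i$, that each $(Y_i,B_i)$ is isomorphic as a pair to $(T(a,1),B_T)$, the only issue being to control the fibered modifications. By Example~\ref{Exa:Ellink} a fibered modification out of a triangle pair of type $(0,0,-(a-2))$ is a canonical procedure (blow up the vertex of the two $(0)$-curves, contract the chosen one, iterate), hence is determined once one decides which $(0)$-curve plays which role. On $(T(a,1),B_T)$ the two $(0)$-curves are $\mathcal{E}_1$ and $\mathcal{E}_5$, and the biregular involution $\sigma_3$ of Lemma~\ref{Lem:AutoBoundA1} interchanges them; thus the two fibered modifications issuing from $(T(a,1),B_T)$ are conjugate by $\sigma_3$. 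Now Example~\ref{Exa:Fibsigma3} computes one of them explicitly and identifies it with $\sigma_2$, which is a birational self-map of the very pair $(T(a,1),B_T)$; its $\sigma_3$-conjugate is therefore also a self-map of $(T(a,1),B_T)$. Consequently a fibered modification out of any pair isomorphic to $(T(a,1),B_T)$ again lands on a pair isomorphic to $(T(a,1),B_T)$, closing the induction. Fixing isomorphisms $(Y_i,B_i)\cong(T(a,1),B_T)$ turns each $f_i$ into either an isomorphism of the pair $(T(a,1),B_T)$ or the fibered modification $\sigma_2$ (up to such isomorphisms). Since the isomorphisms of $(T(a,1),B_T)$ are exactly the group $\Aut(T(a,1),B_T)$, generated by $\mu_{a,1}$ and the biregular involution $\sigma_3$ (Lemma~\ref{Lem:AutoBoundA1}), this shows $\Aut(X(a,1))=\langle\mu_{a,1},\sigma_2,\sigma_3\rangle$; the reverse inclusion is already known, $\mu_{a,1}$, $\sigma_2$, $\sigma_3$ being automorphisms of $X(a,1)$.

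The hardest step, and the one I would treat most carefully, is precisely this control of the fibered modifications: a priori a fibered modification could redistribute the $a$ blown-up points along the $(1,1)$-curve carrying $\mathcal{E}_3$ and so produce a non-isomorphic triangle pair, breaking the induction. What rescues the argument is that the modification is canonical once the ordering of the two $(0)$-curves is chosen, so there are only two of them, and the explicit formula of Example~\ref{Exa:Fibsigma3}, namely $\sigma_2\colon(y_1,y_2,y_3,y_4)\mapsto(y_3,y_2,y_1,y_1y_4-y_2^{a-1})$, exhibits one of them as an automorphism of $X(a,1)$ whose extension has source and target equal to $(T(a,1),B_T)$. Here I would also confirm that for $a\ge 4$ this $\sigma_2$ is genuinely a fibered modification and not an isomorphism of pairs, which is clear since it carries the $(0)$-curve $\mathcal{E}_1$ onto the curve $\mathcal{E}_3$ of self-intersection $-(a-2)\le -2$.

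Finally I would assemble the group structure. The subgroup $\mu_{a,1}$ is normal: it is the diagonalizable group attached to the grading of $\calC(a,1)$, hence is normalized by every automorphism (the remark quoted after Theorem~\ref{TheThm}), and one checks directly on the formulas that $\sigma_2$ and $\sigma_3$ conjugate it into itself. Moreover $\mu_{a,1}\cap\langle\sigma_2,\sigma_3\rangle=1$, because the elements of $\langle\sigma_2,\sigma_3\rangle$ are cluster automorphisms permuting the cluster variables by $y_n\mapsto y_{2p-n}$, whereas a nontrivial element of $\mu_{a,1}$ rescales $y_2$ by a root of unity $\neq 1$, which cannot coincide with any such permutation of the pairwise non-proportional $y_n$. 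Hence $\Aut(X(a,1))=\mu_{a,1}\rtimes\langle\sigma_2,\sigma_3\rangle$. Since $ab=a\ge 4>3$, the introduction already records that $\langle\sigma_2,\sigma_3\rangle\cong D_\infty=\mathbb{Z}/2\mathbb{Z}\star\mathbb{Z}/2\mathbb{Z}=\mathbb{Z}\rtimes\mathbb{Z}/2\mathbb{Z}$ (equivalently $\sigma_2\sigma_3$ has infinite order), and $\mu_{a,1}=\{\mu\in\k^*\mid\mu^a=1\}$, being a finite subgroup of $\k^*$, is finite cyclic. This gives the asserted description.
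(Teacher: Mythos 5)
Your proof is correct, and its skeleton is the same as the paper's: compactify by the standard triangle $(T(a,1),B_T)$, factor any automorphism using Proposition~\ref{Prop:Ngons}, identify every fibered modification with $\sigma_2$ up to isomorphisms of pairs (Example~\ref{Exa:Fibsigma3}), and identify the pair automorphisms via Lemma~\ref{Lem:AutoBoundA1}, so that $\Aut(X(a,1))=\langle \mu_{a,1},\sigma_2,\sigma_3\rangle$. Two differences are worth recording. First, you are more careful than the paper at two points: you spell out the closure argument (a fibered modification out of a pair isomorphic to $(T(a,1),B_T)$ must land on such a pair, because the modification is canonical once the ordering of the two $(0)$-curves is fixed, the two choices are related by $\sigma_3$, and one of them is exhibited by Example~\ref{Exa:Fibsigma3} as a self-map), and you explicitly verify normality of $\mu_{a,1}$ and triviality of $\mu_{a,1}\cap\langle\sigma_2,\sigma_3\rangle$, which the paper leaves implicit; this is a genuine gain in rigor. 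Second, the one real divergence: for the infinite order of $\sigma_2\sigma_3$ you invoke the introduction's claim that $n=\infty$ when $ab>3$, which ultimately rests on the Fomin--Zelevinsky finite-type classification of cluster algebras, whereas the paper gives a self-contained geometric argument ($\sigma_2\sigma_3$ and its inverse have distinct unique proper indeterminacy points, so by induction $(\sigma_2\sigma_3)^n$ always has one); the paper deliberately prefers this independent route, as it says explicitly in the proof of Proposition~\ref{GenCase}. A small repair to suggest: your trivial-intersection argument leans on the pairwise \emph{non-proportionality} of the cluster variables, which you assert but do not prove; it is cleaner to test a putative common element on $y_1$ and $y_3$, which every element of $\mu_{a,1}$ fixes, so that only pairwise \emph{distinctness} of the $y_n$ (equivalent to $ab>3$, already cited) is needed.
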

\begin{proof}
Because $a\ge 4$, the pair $(T(a,1),B_T)$ is standard. According to Proposition~\ref{Prop:Ngons}, every automorphism of $X(a,1)$ decomposes into fibered modification and isomorphisms of pairs. Each fibered modification is equal to $\sigma_2$, up to isomorphism of pairs (Example~\ref{Exa:Fibsigma3}), and each automorphism of the pair $(T(a,1),B_T)$ is generated by $\sigma_3$ and $\mu_{a,1}$ (Lemma~\ref{Lem:AutoBoundA1}). Hence, $\Aut(X(a,1))$ is generated by $\mu_{a,1}$, $\sigma_2$ and $\sigma_3$. To achieve the proof, it remains to observe that $\sigma_2\sigma_3$ is of infinite order. The map $\sigma_2\sigma_3$ and its inverse have both a unique proper indeterminacy point, and these two points are different. Proceeding by induction, we obtain that $(\sigma_2\sigma_3)^n$ has again a unique proper indeterminacy point for any $n\ge 1$, always being the proper indeterminacy point of $\sigma_2\sigma_3$.
\end{proof}

\section{Cluster algebras of types $A_2,B_2,G_2$}\label{abg}
By contrast to the case $a\ge 4$, we will see that the group of automorphisms of $X(a,1)$ with $a=1,2,3$ is finite, and is in fact contained in the group of automorphisms of a symmetric $n$-gon $(Y(a,1),B_Y)$ that  we define now.

\begin{enumerate}
\item
The pair $(Y(1,1),B_Y)=(Z(1,1),B_Z)$ is  a pentagon of five $(-1)$-curves.
 \begin{center}
\xy 
(-70,5)*{};
(-11,-2)*{};(-2,11)*{}**\dir{-};(-9,6)*{-1};(-5,10)*{};(10,5)*{}**\dir{-};(3,10)*{-1};(8,8)*{};(8,-8)*{}**\dir{-};(11,0)*{-1};(10,-5)*{};(-5,-10)*{}**\dir{-};(3,-10)*{-1};(-2,-11)*{};(-11,2)*{}**\dir{-};(-9,-6)*{-1};
(-4,3)*{E_3};(2,5)*{E_4};(5,0)*{E_5};(2,-5)*{E_1};(-4,-3)*{E_2};
 \endxy
 \end{center}
\item
The pair $(Y(2,1),B_Y)=(T(2,1),B_T)$ is a triangle of three $(0)$-curves.
\begin{center}
\xy 
(-70,5)*{};
(-13,-2)*{};(8,10)*{}**\dir{-};(-4,8)*{0};(-1.75,2)*{\mathcal{E}_5};(5,12)*{};(5,-12)*{}**\dir{-};(8,0)*{0};(3,0)*{\mathcal{E}_3};(8,-10)*{};(-13,2)*{}**\dir{-};(-4,-8)*{0};(-1.5,-2.25)*{\mathcal{E}_1};  
 \endxy
 \end{center}
\item
The pair $(Y(3,1),B_Y)$ is obtained by blowing-up the point $\mathcal{E}_1\cap \mathcal{E}_5$ in $(T(3,1),B_T)$, and is a square of four $(-1)$-curves. We denote by $\mathcal{E}_i'$ the strict transform of $\mathcal{E}_i$ and by $\mathcal{E}_7'$ the exceptional curve produced, and obtain the following diagram.
\begin{center}
\xy 
(-90,5)*{};
 (-18.5,-11.5)*{};(-41.5,11.5)*{}**\dir{-};(-25,0)*{-1};(-30,4)*{{\mathcal{E}}_3};
 (-40,-12)*{};(-40,12)*{}**\dir{-};(-43,-2)*{{\mathcal{E}}_5};(-43,2)*{0};
 (-42,-10)*{};(-18,-10)*{}**\dir{-};(-32,-13)*{0};(-28,-13)*{{\mathcal{E}}_1};(-40,-10)*{\bullet};
(-17,0)*{\longleftarrow};
(-12,-2)*{};(2,12)*{}**\dir{-};(-7,7)*{-1};(-3,3)*{\mathcal{E}_5'};(-2,12)*{};(12,-2)*{}**\dir{-};(7,7)*{-1};(3,3)*{\mathcal{E}_3'};(12,2)*{};(-2,-12)*{}**\dir{-};(7,-7)*{-1};(3,-3)*{\mathcal{E}_1'};(2,-12)*{};(-12,2)*{}**\dir{-};(-7,-7)*{-1};(-3,-3)*{\mathcal{E}_7'};
 \endxy
 \end{center}
\end{enumerate}
\begin{lemma}\label{Lem:ExtXa1}
For $a = 1,2,3$, every automorphism of $X(a,1)$ extends to an automorphism of the pair $(Y(a,1),B_Y)$.
\end{lemma}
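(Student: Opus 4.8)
The plan is to show that every automorphism $g$ of $X(a,1)$ extends to a birational self-map of $Y(a,1)$ that is in fact biregular, and preserves the boundary $B_Y$. Since $Y(a,1)$ is a smooth projective compactification of $X(a,1)$ with boundary $B_Y$, any automorphism $g$ of $X(a,1)$ automatically induces a birational map $\bar{g}\colon Y(a,1)\dashrightarrow Y(a,1)$ that restricts to $g$ on the open part $X(a,1)$, and hence a birational map of pairs $(Y(a,1),B_Y)\dashrightarrow (Y(a,1),B_Y)$. The content of the lemma is thus that this $\bar{g}$ has no base points, i.e.\ is an isomorphism. The key obstacle is that $B_Y$ is not a \emph{standard} $n$-gon in the three exceptional cases ($a=1,2,3$), so Proposition~\ref{Prop:Ngons} does not apply directly; I must instead argue by hand that no blow-up/blow-down is possible while preserving the boundary cycle.

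First I would treat $a=1,2$ together. In these cases every irreducible component of $B_Y$ has self-intersection $\ge 0$: for $a=2$ the triangle has type $(0,0,0)$, and for $a=1$ the pentagon consists of five $(-1)$-curves. The essential point is that an extension $\bar{g}$ of $g$ can only have base points at the singular points of $B_Y$ (the intersection points of the cycle), because the restrictions of the resolution morphisms are isomorphisms away from the boundary, exactly as in the proof of Proposition~\ref{Prop:Ngons}. I would then observe that blowing up such a point and resolving $\bar g$ would force some component of $B_Y$ to be contracted and some curve to be created with self-intersection strictly larger than that of the components it is mapped onto. But a minimal resolution of $\bar g$ contracts a $(-1)$-curve on the target side, and there is \emph{no} component of $B_Y$ with self-intersection $\le -1$ available to be the strict transform of a $(0)$-curve in the required way for $a=1,2$; more precisely, the boundary cycle $B_Y$ contains no $(-1)$-curve for $a=2$, and for $a=1$ the configuration of five $(-1)$-curves forming a cycle with the anti-canonical property is rigid. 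I would conclude that $\bar g$ is an isomorphism in these cases by checking that no tower-resolution compatible with the cycle structure exists, using $B_Y\sim -K$ and the numerical constraint $C^2+CK=-2$ from the earlier remark.

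For $a=3$, where $(Y(3,1),B_Y)$ is a square of four $(-1)$-curves, the same strategy applies but the argument must track how the extra blow-up $\mathcal{E}_1\cap\mathcal{E}_5$ interacts with a putative extension. Here I would again argue that base points of $\bar g$ lie at vertices of the square, and that resolving them would create a component violating either the cycle-intersection pattern or the bound $C(-K)=C^2+2$ for boundary curves. Since all four components are $(-1)$-curves and $-K_{Y}=B_Y$, any $(-1)$-curve contracted by a minimal resolution on the target would have to be the strict transform of a boundary $(0)$-curve — but there are none — so $\bar g$ can have no proper indeterminacy point, forcing it to be biregular.

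The main obstacle, and the step requiring the most care, is the $a=1$ pentagon case: unlike the standard situation of Proposition~\ref{Prop:Ngons}, all five components are $(-1)$-curves of equal self-intersection, so the simple ``largest self-intersection'' dichotomy used there is unavailable, and I must instead exploit the full anti-canonical rigidity (the fact that $B_Y$ is an anti-canonical cycle of five $(-1)$-curves on a del Pezzo-type surface) to rule out any nontrivial birational modification of the pair. In all three cases the upshot is the same: the absence of a $(0)$-curve in $B_Y$ blocks the fibered-modification mechanism, so every automorphism of the open surface extends biregularly to the compactification preserving the boundary.
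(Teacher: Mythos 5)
Your overall strategy (extend to a birational map of pairs, take a minimal resolution, locate base points at the vertices of the cycle, and derive a contradiction from self-intersection constraints) is the same as the paper's, but the core case analysis contains a genuine error that breaks the argument exactly in the cases $a=1$ and $a=3$. You assert that any $(-1)$-curve contracted by the minimal resolution on the target side ``would have to be the strict transform of a boundary $(0)$-curve --- but there are none.'' This is false: the $(-1)$-curve contracted by $\eta$ is the strict transform of a boundary curve $E$ with $E^2\ge -1$ whose self-intersection has dropped to $-1$ after the blow-ups of $\pi$, and when $B_Y$ itself consists of $(-1)$-curves (the pentagon for $a=1$, the square for $a=3$) this curve can simply be a boundary $(-1)$-curve of $B_Y$ on which $\pi$ blows up \emph{no} point at all. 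So the ``fibered-modification mechanism'' is not blocked by the mere absence of $(0)$-curves; one can, for instance, blow up a vertex of the pentagon and contract an untouched boundary $(-1)$-curve elsewhere in the cycle, and nothing in your argument excludes a longer sequence of such operations returning to the original pair. The paper closes precisely this case by a separate argument: if $f$ has a unique proper indeterminacy point and $\eta$ contracts exactly one $(-1)$-curve, that curve is a boundary $(-1)$-curve $E$ not met by $\pi$'s blow-ups, and since the cycle has length $\ge 4$ there is a neighboring boundary $(-1)$-curve $E'$ containing no indeterminacy point; contracting $E$ forces $f(E')$ to have self-intersection $\ge 0$, which the target boundary (all $(-1)$-curves) cannot accommodate. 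Your appeal to unspecified ``anti-canonical rigidity'' for the pentagon is not a substitute for this step --- it is a restatement of the conclusion.

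Two further points. First, your reasoning for $a=2$ also names the wrong obstruction: the absence of $(-1)$-curves in the triangle $(0,0,0)$ does not by itself prevent a nontrivial map of pairs (blowing up a vertex creates a $(-1)$-curve in the boundary cycle that $\eta$ may contract); what kills it, as in the paper, is that after such a contraction the image of the $(0)$-curve \emph{not} passing through the indeterminacy point acquires self-intersection $\ge 1$, and no such curve exists in the target. Second, you never treat the case where $\eta$ contracts two or more $(-1)$-curves; the paper handles it separately (two disjoint boundary $(-1)$-curves lead to a common neighbor whose image has self-intersection $\ge 0$; two $(0)$-curves lead to a boundary with only two components), and without this your dichotomy ``exactly one proper indeterminacy point'' is unjustified.
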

\begin{proof}
Suppose for contradiction the existence of a birational map $$f\colon (Y(a,1),B_Y)\dasharrow (Y(a,1),B_Y)$$ which is not an automorphism of pairs. Recall that the type of $B_Y$ is 
\begin{center}$(-1,-1,-1,-1,-1)$, $(0,0,0)$ or $(-1,-1,-1,-1)$.\end{center}
 Let us take a minimal resolution 
$$\xymatrix{& Z\ar[ld]_{\pi}\ar[rd]^{\eta}\\
Y(a,1)\ar@{-->}[rr]^{f} &&Y(a,1)
}$$
of $f$. As observed in Proposition~\ref{Prop:Ngons}, it follows from the minimality condition that the preimage $B_Z$ of $B$ under $\pi$ is equal to the preimage of $B$ under $\eta$ and consists of a cycle of smooth rational curves. In particular, the indeterminacy points of $f$ and $f^{-1}$ are singular points of $B$. 

Since $f$ is not an isomorphism, $\pi$ and $\eta$ contract at least one $(-1)$-curve. 

If $\eta$ contracts at least two $(-1)$-curves, these are sent by $\pi$ onto two curves of $E_1,E_2\subset B$ of self-intersection $\ge -1$. If $E_1,E_2$ are $(-1)$-curves of $B$, $\pi$ does not blow-up any point of these two disjoint curves. There is one irreducible curve of $B$ touching these two curves, which is thus sent by $f$ onto a curve of self-intersection $\ge 0$; this is impossible. If $E_1$ and $E_2$ are $(0)$-curves, then $\pi$ blows-up the point of intersection, but no other point. This is impossible since the boundary obtained would have only two curves.

The only remaining case is when $\eta$ contracts exactly one $(-1)$-curve, and by symmetry we can also assume that $\pi$ also contracts one $(-1)$-curve, which implies that $f$ has exactly one proper indeterminacy point. If $B$ is a triangle, we observe that the image of the $(0)$-curve not touching the indeterminacy point is a curve of self-intersection $\ge 1$, which is impossible. The remaining case is when $B$ only contains $(-1)$-curves, and  so the $(-1)$-curve contracted by $\eta$ is the strict transform by $\pi$ of a $(-1)$-curve $E$ of $B$. No point of $E$ is then blown-up by $\pi$. Since $f$ has only one proper indeterminacy point, there is a $(-1)$-curve $E'$ of $B$ which touches $E$ and which does not contain any indeterminacy point. Its image by $f$ is a curve of self-intersection $\ge 0$, which is impossible. 
\end{proof}

 \begin{proposition}[Case $A_2$]\label{A2} The group $\Aut(X(1,1))$ is a dihedral group $D_{10}$ of order 10 generated by the cluster transformations $\sigma_2,\sigma_3$, which act on the pentagon $(E_1,\dots,E_5)$ via the following actions:
 \begin{center}
\xy 
(-30,5)*{};
(-17,0)*{\sigma_2\colon };
(-11,-2)*{};(-2,11)*{}**\dir{-};(-9,6)*{E_3};(-5,10)*{};(10,5)*{}**\dir{-};(3,10)*{E_4};(8,8)*{};(8,-8)*{}**\dir{-};(11,0)*{E_5};(10,-5)*{};(-5,-10)*{}**\dir{-};(3,-10)*{E_1};(-2,-11)*{};(-11,2)*{}**\dir{-};(-9,-6)*{E_2};
{\ar@{<->} (2.5,-7)*{};(7.5,0)*{}};
{\ar@{<->} (3,7)*{};(-6,-4.5)*{}};
(33,0)*{\sigma_3\colon };
(39,-2)*{};(48,11)*{}**\dir{-};(41,6)*{E_3};(45,10)*{};(60,5)*{}**\dir{-};(53,10)*{E_4};(58,8)*{};(58,-8)*{}**\dir{-};(61,0)*{E_5};(60,-5)*{};(45,-10)*{}**\dir{-};(53,-10)*{E_1};(48,-11)*{};(39,2)*{}**\dir{-};(41,-6)*{E_2};
{\ar@{<->} (52.5,-7)*{};(44,4.5)*{}};
{\ar@{<->} (53,7)*{};(57.5,0)*{}};
(80,0)*{(\sigma_2\sigma_3)^3\colon };
(89,-2)*{};(98,11)*{}**\dir{-};(91,6)*{E_3};(95,10)*{};(110,5)*{}**\dir{-};(103,10)*{E_4};(108,8)*{};(108,-8)*{}**\dir{-};(111,0)*{E_5};(110,-5)*{};(95,-10)*{}**\dir{-};(103,-10)*{E_1};(98,-11)*{};(89,2)*{}**\dir{-};(91,-6)*{E_2};
{\ar (102,-7.3)*{};(95,-4.7)*{}};
{\ar (94,-3.5)*{};(94,3.5)*{}};
{\ar (95,4.7)*{};(102,7.3)*{}};
{\ar (103.2,6.7)*{};(107.3,0.8)*{}};
{\ar (107.3,-0.8)*{};(103.2,-6.7)*{}};
 \endxy
 \end{center}
\end{proposition}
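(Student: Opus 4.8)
The plan is to pass to the anticanonical compactification, recognise it as the del Pezzo surface of degree $5$, and reduce the statement to combinatorics of its ten $(-1)$-curves. By Lemma~\ref{Lem:ExtXa1} (case $a=1$), restriction to $X(1,1)$ identifies $\Aut(X(1,1))$ with $\Aut(Y(1,1),B_Y)$, where $(Y(1,1),B_Y)=(Z(1,1),B_Z)$ is the pentagon of five $(-1)$-curves. First I would pin down $Y(1,1)$: by Corollary~\ref{Cor:Pentagon} it is the blow-up of $\PP^2$ at $[1:0:-1]$, $[1:-1:0]$, $[0:1:0]$, $[0:0:1]$, and a direct check shows these four $\k$-rational points to be in general position (no three collinear). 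Hence $Y(1,1)$ is the split del Pezzo surface of degree $5$, whose automorphism group over $\k$ is $\frakS_5$ acting faithfully on its ten $(-1)$-curves. In the standard dictionary these ten curves are the $2$-element subsets of $\{1,\dots,5\}$, two of them meeting precisely when the subsets are disjoint, so that the intersection graph is the Petersen graph.

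Under this dictionary the boundary $B_Y$, being a cycle of five $(-1)$-curves, is a $5$-cycle in the Petersen graph. Since $\frakS_5$ permutes the twelve such $5$-cycles transitively, $\Aut(Y(1,1),B_Y)=\mathrm{Stab}_{\frakS_5}(B_Y)$ has order $120/12=10$. I would then check that the natural map from this stabilizer to the symmetry group $D_{10}$ of the abstract pentagon is injective: an automorphism fixing every boundary curve fixes each of the corresponding $2$-subsets, and each index $i\in\{1,\dots,5\}$ lies in exactly two of them, whose intersection is the singleton $\{i\}$; hence it fixes $1,\dots,5$ and is the identity. Comparing orders, $\Aut(X(1,1))\cong\Aut(Y(1,1),B_Y)\cong D_{10}$.

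Finally I would identify $\langle\sigma_2,\sigma_3\rangle$ with this group. Both $\sigma_2,\sigma_3$ are involutions of $X(1,1)$, and their product acts on cluster variables by $\sigma_2\sigma_3\colon y_n\mapsto y_{n-2}$; as the variables have period $h+2=5$ for $ab=1$, the element $\sigma_2\sigma_3$ has order $5$. Thus $\langle\sigma_2,\sigma_3\rangle$ is dihedral of order $10$, and by the order count it equals $\Aut(X(1,1))$. The two pictured reflections, together with the rotation $(\sigma_2\sigma_3)^3\colon y_n\mapsto y_{n-1}$, then follow by reading off the induced permutations of the boundary curves $E_1,\dots,E_5$.

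I expect the main obstacle to be twofold. The cleanest input, $\Aut(Y(1,1))\cong\frakS_5$, must be justified over a possibly non-closed field $\k$ of arbitrary characteristic (here $ab=1$ imposes no restriction), using that all four blown-up points are $\k$-rational so that the surface and all its automorphisms are split; this is the step I would document most carefully. The only genuinely computational nuisance is matching the precise reflections in the pictures, i.e.\ determining exactly how $\sigma_2$ and $\sigma_3$ permute the named curves $E_1,\dots,E_5$, which requires the explicit geometric description of these automorphisms rather than merely their abstract group-theoretic type.
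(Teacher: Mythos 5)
Your proposal is correct in substance and shares the paper's skeleton: reduce to $\Aut(Y(1,1),B_Y)$ via Lemma~\ref{Lem:ExtXa1}, recognize $Y(1,1)$ as a quintic del Pezzo surface, and use the action on the ten $(-1)$-curves (the Petersen graph), in which the stabilizer of the boundary pentagon is $D_{10}$. But you handle both halves of the argument genuinely differently. For the upper bound, the paper only uses the inclusion $\Aut(Y(1,1))\subset\frakS_5$ and quotes that each of the twelve pentagons has stabilizer $D_{10}$; you prove the sharper equality $\Aut(Y(1,1))\cong\frakS_5$ (via $\k$-rationality and general position of the four blown-up points) together with a clean, self-contained injectivity argument (each index $i$ is recovered as the intersection of the two boundary $2$-subsets containing it), which yields $|\Aut(X(1,1))|=10$ outright. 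For the lower bound, the paper computes geometrically how $\sigma_2,\sigma_3$ permute $E_1,\dots,E_5$ (via Lemma~\ref{Lem:AutoBoundA1} and Example~\ref{Exa:Fibsigma3}) and deduces that $\langle\sigma_2,\sigma_3\rangle$ surjects onto $D_{10}$; you instead argue algebraically that $\sigma_2\sigma_3\colon y_n\mapsto y_{n-2}$ has order $5$ by cluster periodicity, which is essentially the remark in the paper's introduction. Your redundancy is actually a strength: once $\sigma_2\sigma_3$ has order $5$, the mere inclusion $\Aut(Y(1,1))\subset\frakS_5$ already finishes the proof, so the splitness point you single out as delicate can be bypassed entirely (this is exactly what the paper does). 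One small debt of your algebraic step: you need the five cluster variables to remain pairwise distinct in $\calC(1,1)\otimes\k$, not merely in $\calC(1,1)$; this is immediate from their Laurent expressions, but should be said, since the periodicity quoted in the introduction concerns the $\bbZ$-algebra.

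The one real incompleteness is the step you defer: the proposition asserts the specific permutations of $E_1,\dots,E_5$ effected by $\sigma_2$ and $\sigma_3$ (the pictures), and an abstract identification of the group with $D_{10}$ does not produce them. The missing computation is exactly what Lemma~\ref{Lem:AutoBoundA1} and Example~\ref{Exa:Fibsigma3} supply: the involution fixing $y_2$, namely $\sigma_2$, exchanges $\mathcal{E}_1$ and $\mathcal{E}_3$ in $T(1,1)$, while the involution swapping $y_2\leftrightarrow y_4$, namely $\sigma_3$, exchanges the two $(0)$-curves $\mathcal{E}_1$ and $\mathcal{E}_5$; lifting through the contraction of $E_2,E_4$ gives $\sigma_2=(E_1\,E_3)(E_4\,E_5)$ and $\sigma_3=(E_1\,E_5)(E_2\,E_4)$. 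Your caution about this step is well placed: a direct check (e.g.\ writing $\sigma_2$ on $\p^2$ as $[x_0:x_1:x_2]\dasharrow[x_0x_2:x_1x_2:x_0(x_0+x_1)]$, which contracts the line $x_2=0$ to $[0:0:1]$ and the line $x_0=0$ to $[0:1:0]$ while preserving $x_1=0$) confirms these permutations, whereas the pictures in the proposition, and the paper's own proof of it, attach them to the opposite generators, i.e.\ interchange $\sigma_2$ and $\sigma_3$ relative to Lemma~\ref{Lem:AutoBoundA1} and Example~\ref{Exa:Fibsigma3}. This swap does not affect the group-theoretic conclusion, but it shows that the ``computational nuisance'' you postpone is precisely where care is needed, and your proof as written leaves it open.
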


\begin{remark}\label{Rem:X11sigma52}
The natural automorphism $\sigma_{5/2}\colon (y_1,y_2,y_3,y_4)\mapsto (y_4,y_3,y_2,y_1)$  of $X(1,1)$ corresponds to the permutation $E_1\leftrightarrow E_4$, $E_2\leftrightarrow E_3$ and is thus equal to $\sigma_2\sigma_3\sigma_2\sigma_3\sigma_2$.
\end{remark}
\begin{proof}According to Lemma~\ref{Lem:ExtXa1}, we have $\Aut(X(1,1))=\Aut(Y(1,1),B_Y)$. Recall that $Y(1,1)=Z(1,1)\to T(1,1)$ contracts $E_2$ and $E_4$. By Lemma~\ref{Lemma:BlowUpTriangle}, $\sigma_2$ extends to an automorphism of $(T(1,1),B_T)$ which exchanges $\mathcal{E}_1$ and $\mathcal{E}_5$ and which fixes $\mathcal{E}_3$. Hence, the corresponding automorphism of $Y(1,1)$ corresponds to the permutation $ E_1\leftrightarrow E_5,E_2\leftrightarrow E_4 $. The action of $\sigma_3$ is given in Example~\ref{Exa:Fibsigma3}. Since it exchanges $\mathcal{E}_1$ and $\mathcal{E}_3$, it corresponds to the permutation $ E_1\leftrightarrow E_3,E_4\leftrightarrow E_5$.

Observe that $\sigma_2\sigma_3$ acts as $E_1\to E_3\to E_5\to E_2\to E_4$, which implies that $(\sigma_2\sigma_3)^3$ is the permutation $E_1\to E_2\to E_3\to E_4\to E_5$. Since both $\sigma_2$ and $\sigma_3$ conjugate $(\sigma_2\sigma_3)^3$ to its inverse, the group generated by $\sigma_2,\sigma_3$ admits a surjective homomorphism to $D_{10}$.

Let us observe that $Y(1,1)$ is a del Pezzo surface of degree $5$, a fact which directly follows from the description of $\pi\colon Y(1,1)=Z(1,1)\to \p^1\times \p^1$ (see also Remark~\ref{CubicX11} for another argument).

It follows from the classification of automorphism groups of del Pezzo surfaces that $\Aut(Y(1,1)) \subset \frakS_5$, and equality holds if $\k$ is algebraically closed. The group $\frakS_5$ is the group of symmetries of the union of ten $(-1)$-curves on $Y(1,1)$ whose intersection graph is the Petersen graph (see \cite{CAG}).

\xy (-70,25)*{};(-70,-25)*{};
(20,0)*{\bullet};(6.2,19)*{\bullet}**\dir{-};(6.2,19)*{};(-16.2, 11.7)*{\bullet}**\dir{-};(-16.2, 11.7)*{\bullet};(-16,-11.7)*{\bullet}**\dir{-};
(-16.2,-11.7)*{\bullet};(6.2,-19)*{\bullet}**\dir{-};(6.2,-19)*{};(20,0)*{\bullet}**\dir{-};
(10,0)*{\bullet};(-8.1,5.85)*{\bullet}**\dir{-};(-8.1,5.85)*{};(3.1, -9.5)*{\bullet}**\dir{-};(3.1, -9.5)*{};(3.1,9.5)*{\bullet}**\dir{-};
(3.1,9.5)*{};(-8.1,-5.85)*{\bullet}**\dir{-};
(-8.5,-5.85)*{};(10,0)*{}**\dir{-};   (10,0)*{};(20,0)*{}**\dir{-};  (3.1,9.5)*{};(6.2,19)*{}**\dir{-};
 (-16.2, 11.7)*{};(-8.1,5.85)*{}**\dir{-};(-16.2, -11.7)*{};(-8.2,-5.85)*{}**\dir{-};(6.2, -19)*{};(3.1,-9.85)*{}**\dir{-};
\endxy

 In the anti-canonical model, these $(-1)$ curves are the 10 lines on the surface. The boundary $B$ consists of five $(-1)$-curves forming a subgraph of the Petersen graph isomorphic to a pentagon. There are 12 such subgraphs, and the stabilizer group of each one is isomorphic to $D_{10}$. This shows that $\Aut(X(1,1))$ is contained in $D_{10}$, and hence coincides with it.
\end{proof}
\begin{remark}\label{CubicX11}
Note that, expressing $y_2$ in terms of $y_1$ and $y_3$, we obtain that   $X(1,1)$ admits a natural compactification in $\bbP^3$ isomorphic to a cubic surface $X$ with equation
 $$x_1x_2x_3 - x_0^2x_4-x_2x_0^2-x_0^3 = 0.$$
 The boundary $x_0 = 0$ consists of three coplanar lines and the surface has two of the intersection points as ordinary double points.   The intersection 
 points of these  lines are singular  points of the cubic surface. The points $[0:1:0:0]$ and $[0:0:1:0]$ are ordinary double points and the point 
 $[0:0:1:0]$ is a double rational point of type $A_2$. Let $X'\to X$ be a minimal resolution of singularities. The pre-image of the boundary is a 7-gon of type $(-1,-2,-2,-1,-2,-1,-2)$. By blowing down the first and fourth curve, we obtain a smooth compactification $Y(1,1)$ with the boundary equal to a pentagon of $(-1)$-curves.  Since $X'$ is a weak del Pezzo surface of degree 3, the surface $Y(1,1)$ is a del Pezzo surface of degree 5.

\end{remark}

\begin{proposition}[Case $B_2$] \label{B2} The group $\Aut(X(2,1))$ is isomorphic to $\frakS_3\times \mu_{2,1}\simeq D_{12}$. The group $\frakS_3$ is generated by $\sigma_2$ and $\sigma_3$, and $\mu_{2,1}$ by the automorphism $(y_1,y_2,y_3,y_4)\mapsto (y_1,-y_2,y_3,-y_4)$, which fixes the three curves $\mathcal{E}_1,\mathcal{E}_3,\mathcal{E}_5$. The actions of $\sigma_2$ and $\sigma_3$ on the triangle are the following
\begin{center}
\xy (0,15)*{};
(-50,5)*{};
(-17,0)*{\sigma_2\colon };
(-13,-2)*{};(8,10)*{}**\dir{-};(-4,8)*{\mathcal{E}_5};(5,12)*{};(5,-12)*{}**\dir{-};(8,0)*{\mathcal{E}_3};(8,-10)*{};(-13,2)*{}**\dir{-};(-4,-8)*{\mathcal{E}_1};
{\ar@{<->} (-1.75,-3.5)*{};(-1.75,3.5)*{}};
(33,0)*{\sigma_3\colon };
(37,-2)*{};(58,10)*{}**\dir{-};(46,8)*{\mathcal{E}_5};(55,12)*{};(55,-12)*{}**\dir{-};(58,0)*{\mathcal{E}_3};(58,-10)*{};(37,2)*{}**\dir{-};(46,-8)*{\mathcal{E}_1};
{\ar@{<->} (48.5,-3.5)*{};(54.5,0)*{}};
 \endxy
 \end{center}
\end{proposition}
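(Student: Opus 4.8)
The plan is to follow the same strategy as in Proposition~\ref{A2}: first invoke Lemma~\ref{Lem:ExtXa1} to reduce the computation of $\Aut(X(2,1))$ to the computation of $\Aut(Y(2,1),B_Y)$, where $(Y(2,1),B_Y)=(T(2,1),B_T)$ is the triangle of three $(0)$-curves. Since the whole automorphism group of $X(2,1)$ now lives inside automorphisms of this compact pair, the task splits into understanding the action on the three boundary curves $\{\mathcal{E}_1,\mathcal{E}_3,\mathcal{E}_5\}$ together with the pointwise stabilizer (the kernel of that action). This kernel and the image on $\{\mathcal{E}_1,\mathcal{E}_3,\mathcal{E}_5\}$ have already been identified in Lemma~\ref{Lem:AutoBoundA1}: for $a=2$ one has the split exact sequence
$$1\longrightarrow \mu_{2,1}\to \Aut(T(2,1),B_T)\to R_2\to 1,$$
with $R_2=\langle \sigma_2,\sigma_3\rangle\simeq \frakS_3$. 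So essentially all of the structural work is already done; what remains is to assemble it into the stated isomorphism and verify the explicit formulas.

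First I would record that, by Lemma~\ref{Lemma:BlowUpTriangle}, $T(2,1)$ is the blow-up of $\p^1\times\p^1$ at the two points $([\xi:1],[\xi:1])$ with $\xi^2+1=0$, and that the three boundary curves $\mathcal{E}_1,\mathcal{E}_3,\mathcal{E}_5$ all have self-intersection $0$ (the type of $B_T$ is $(0,2-a,0)=(0,0,0)$ for $a=2$). Because the triangle is now \emph{symmetric}, all three boundary curves are interchangeable, which is exactly why $R_2$ jumps up from $\mathbb{Z}/2\mathbb{Z}$ to the full symmetric group $\frakS_3$: the two automorphisms produced in the proof of Lemma~\ref{Lem:AutoBoundA1}, namely $\sigma_3$ (exchanging $\mathcal{E}_1\leftrightarrow\mathcal{E}_5$, fixing $\mathcal{E}_3$, coming from $([u_1:u_2],[v_1:v_2])\mapsto([v_2:v_1],[u_2:u_1])$) and $\sigma_2$ (fixing $\mathcal{E}_5$, exchanging $\mathcal{E}_1\leftrightarrow\mathcal{E}_3$, coming from the quadratic involution of $\p^1\times\p^1$), realize two distinct transpositions of the three-element set and hence generate all of $\frakS_3$. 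This gives precisely the pictures of the actions of $\sigma_2,\sigma_3$ on the triangle displayed in the statement.

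Next I would pin down the kernel $\mu_{2,1}$. By Lemma~\ref{Lem:AutoBoundA1}, $\mu_{2,1}\simeq\{\mu\in\k^*\mid \mu^2=1\}$ acts on $X(2,1)$ by $(y_1,y_2,y_3,y_4)\mapsto(y_1,\mu y_2,y_3,\mu^{-1}y_4)$; taking the nontrivial element $\mu=-1$ gives exactly the automorphism $(y_1,y_2,y_3,y_4)\mapsto(y_1,-y_2,y_3,-y_4)$ claimed in the statement, and by construction it lies in the kernel, so it fixes each of $\mathcal{E}_1,\mathcal{E}_3,\mathcal{E}_5$. To conclude I must upgrade the \emph{split} extension of Lemma~\ref{Lem:AutoBoundA1} to a \emph{direct} product. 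The point is that $\mu_{2,1}$ is central: its generator $(y_1,y_2,y_3,y_4)\mapsto(y_1,-y_2,y_3,-y_4)$ commutes with both $\sigma_2$ and $\sigma_3$, as one checks directly on the explicit formulas for $\sigma_2,\sigma_3$ (the sign $-1$ is invariant under $\mu\mapsto\mu^{-1}$, so conjugating the $\mu$-action by any permutation of the $y_i$ that underlies $\sigma_2$ or $\sigma_3$ sends $-1$ to $-1$). Since the normal subgroup $\mu_{2,1}$ is central and the complement $\frakS_3$ meets it trivially, the semidirect product is in fact direct, giving $\Aut(X(2,1))\cong \frakS_3\times\mu_{2,1}$. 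Finally, $\frakS_3\times\mathbb{Z}/2\mathbb{Z}\cong D_{12}$, which yields the stated isomorphism.

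The main obstacle is the last verification: establishing that the extension splits as a \emph{direct} rather than merely semidirect product, i.e.\ that $\mu_{2,1}$ is central in $\Aut(X(2,1))$. This is where one genuinely has to look at the explicit birational formulas for $\sigma_2$ and $\sigma_3$ rather than just their combinatorial action on $\{\mathcal{E}_1,\mathcal{E}_3,\mathcal{E}_5\}$, since the abstract exact sequence of Lemma~\ref{Lem:AutoBoundA1} only guarantees a semidirect product a priori. The special feature that makes it work for $a=2$ is that the relevant root of unity is $\pm 1$, which is fixed by inversion $\mu\mapsto\mu^{-1}$; the same argument would \emph{fail} for larger $a$, and this is consistent with the semidirect (non-direct) products appearing for $a=3$ and $a\ge 2$ in Theorem~\ref{TheThm}.
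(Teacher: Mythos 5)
Your proposal is correct and takes essentially the same route as the paper's proof: reduce to $\Aut(T(2,1),B_T)$ via Lemma~\ref{Lem:ExtXa1}, invoke the split exact sequence of Lemma~\ref{Lem:AutoBoundA1} with $\sigma_2,\sigma_3$ acting as two distinct transpositions of $\{\mathcal{E}_1,\mathcal{E}_3,\mathcal{E}_5\}$ and hence generating $\frakS_3$, and then check that $\mu_{2,1}$ commutes with $\sigma_2,\sigma_3$ so that the semidirect product is direct, giving $\frakS_3\times\mu_{2,1}\simeq D_{12}$. The paper states the commutation as an easy check where you supply the underlying reason (conjugation by $\sigma_2,\sigma_3$ inverts $\mu$, and $-1$ is fixed by inversion), which is a welcome addition; the only caveat is the matching of which generator exchanges which pair of curves, a labeling on which the paper itself is not fully consistent and which does not affect the group-theoretic conclusion.
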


\begin{proof}According to Lemma~\ref{Lem:ExtXa1},  $\Aut(X(2,1))=\Aut(Y(2,1),B_Y)=\Aut(T(2,1),B_T)$. The action of $\sigma_2$ and $\sigma_3$ on the triangle are given in Lemma~\ref{Lem:AutoBoundA1} and Example~\ref{Exa:Fibsigma3}. One can moreover check that $\sigma_2$ and $\sigma_3$ generate a group isomorphic to $\frakS_3$. By Lemma~\ref{Lem:AutoBoundA1}, we have a split exact sequence
$$1\to \mu_{2,1}\to \Aut(X(1,2))\to \frakS_3\to 1.$$
We can then easily check that $\sigma_2,\sigma_3$ commute with $\mu_{2,1}$. Hence, $\Aut(X(1,2))$ is isomorphic to $\frakS_3\times \mu_{2,1}$. In particular, $\mu_{2,1}\simeq \mathbb{Z}/2\mathbb{Z}$ and $\Aut(X(1,2))\simeq \frakS_3\times \mathbb{Z}/2\mathbb{Z}=D_{12}$.
\end{proof}

\begin{remark} Similarly to the previous case, the surface $X(2,1)$ admits a compactification $X$ isomorphic to a cubic surface
$$x_1x_2x_3-x_0^2x_4-x_2^2x_0-x_0^3 = 0.$$
The boundary $x_0 = 0$ consists of three coplanar lines. The surface has 2 singular points of types $A_2$ and $A_1$. We leave to the reader to find a birational isomorphism  from $X$ to our compactification $Y(2,1)$. The surface $Y(2,1)$ is a del Pezzo surface of degree 6. This latter observation also follows from the description of the morphism $Y(2,1)=T(2,1)\to \p^1\times \p^1$, which is the blow-up of two general points.
\end{remark}

\begin{proposition}[Case $G_2$]\label{G2}The group $\Aut(X(3,1))$ is isomorphic to $\mu_{3,1}\rtimes D_8$. It is generated by   the group of cluster automorphisms $D_8$ generated by $\sigma_2$ and $\sigma_3$, and by $\mu_{3,1}\simeq \{\mu \in \k^*\mid \mu^3=1\}$, acting on $X(3,1)$ via 
 $$(y_1,y_2,y_3,y_4)\mapsto (y_1,\mu y_2,y_3,\mu ^{-1}y_4),$$
 and fixing the four curves $\mathcal{E}_1',\mathcal{E}_3',\mathcal{E}_5',\mathcal{E}_7'$. The actions of $\sigma_2$ and $\sigma_3$ on the square are the following
\begin{center}
\xy 
(-30,5)*{};
(-17,0)*{\sigma_2\colon };
(-12,-2)*{};(2,12)*{}**\dir{-};(-7,7)*{\mathcal{E}_5'};(-2,12)*{};(12,-2)*{}**\dir{-};(7,7)*{\mathcal{E}_3'};(12,2)*{};(-2,-12)*{}**\dir{-};(7,-7)*{\mathcal{E}_1'};(2,-12)*{};(-12,2)*{}**\dir{-};(-7,-7)*{\mathcal{E}_7'};
{\ar@{<->} (-4,4)*{};(4,-4)*{}};
(33,0)*{\sigma_3\colon };
(38,-2)*{};(52,12)*{}**\dir{-};(43,7)*{\mathcal{E}_5'};(48,12)*{};(62,-2)*{}**\dir{-};(57,7)*{\mathcal{E}_3'};(62,2)*{};(48,-12)*{}**\dir{-};(57,-7)*{\mathcal{E}_1'};(52,-12)*{};(38,2)*{}**\dir{-};(43,-7)*{\mathcal{E}_7'};
{\ar@{<->}(54,4)*{};(54,-4)*{}};
{\ar@{<->}(46,4)*{};(46,-4)*{}};
(83,0)*{\sigma_2\sigma_3\colon };
(88,-2)*{};(102,12)*{}**\dir{-};(93,7)*{\mathcal{E}_5'};(98,12)*{};(112,-2)*{}**\dir{-};(107,7)*{\mathcal{E}_3'};(112,2)*{};(98,-12)*{}**\dir{-};(107,-7)*{\mathcal{E}_1'};(102,-12)*{};(88,2)*{}**\dir{-};(93,-7)*{\mathcal{E}_7'};
{\ar(106,-3)*{};(106,3)*{}};
{\ar(104,4)*{};(96,4)*{}};
{\ar(94,3)*{};(94,-3)*{}};
{\ar(96,-4)*{};(104,-4)*{}};
 \endxy
 \end{center}
\end{proposition}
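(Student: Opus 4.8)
The plan is to run the same strategy as in the cases $A_2$ and $B_2$. By Lemma~\ref{Lem:ExtXa1} we have $\Aut(X(3,1))=\Aut(Y(3,1),B_Y)$, so it suffices to compute the automorphism group of the pair $(Y(3,1),B_Y)$, whose boundary is the square $B_Y=\mathcal{E}_1'+\mathcal{E}_3'+\mathcal{E}_5'+\mathcal{E}_7'$ of four $(-1)$-curves. Every automorphism of the pair permutes these four curves and respects their incidences, hence acts as an automorphism of the $4$-cycle $C_4$ they form. This produces a homomorphism
$$\rho\colon \Aut(Y(3,1),B_Y)\longrightarrow D_8,$$
where $D_8\cong\Aut(C_4)$ is the full symmetry group of the square. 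I would organize the proof around the exact sequence defined by $\rho$: first the image, then the kernel, then a splitting.

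For the image, note that $\sigma_2,\sigma_3\in\Aut(X(3,1))$ extend to the pair by Lemma~\ref{Lem:ExtXa1}. Reading off their explicit actions, $\sigma_2$ and $\sigma_3$ act as two distinct reflections of the square while $\sigma_2\sigma_3$ acts as a $4$-cycle, i.e.\ a rotation of order $4$ (this is exactly the content of the three pictures in the statement). Two reflections whose product has order $4$ generate all of $D_8$, so $\rho$ is surjective and its image is realized by $\langle\sigma_2,\sigma_3\rangle$.

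For the kernel, the key point is that $\mathcal{E}_7'$ is the exceptional curve of the blow-up $Y(3,1)\to T(3,1)$ of the point $\mathcal{E}_1\cap\mathcal{E}_5$. An element $g\in\ker\rho$ fixes all four boundary curves, in particular $\mathcal{E}_7'$, so it descends to an automorphism $\bar g$ of $T(3,1)$ that preserves the boundary triangle and fixes each of $\mathcal{E}_1,\mathcal{E}_3,\mathcal{E}_5$ (the contraction being equivariant). By the split exact sequence of Lemma~\ref{Lem:AutoBoundA1}, such a $\bar g$ lies in $\mu_{3,1}$, and $g\mapsto\bar g$ is injective. Conversely, the explicit action of $\mu_{3,1}$ on $\p^1\times\p^1$ from Lemma~\ref{Lemma:BlowUpTriangle} fixes the point $([1:0],[0:1])=\mathcal{E}_1\cap\mathcal{E}_5$, so every element of $\mu_{3,1}$ lifts to $Y(3,1)$ fixing all four boundary curves. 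Hence $\ker\rho=\mu_{3,1}$.

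Finally, to split the sequence I would check that $\rho$ restricts to an isomorphism on $\langle\sigma_2,\sigma_3\rangle$; since the image is already $D_8$, it is enough to see $\langle\sigma_2,\sigma_3\rangle\cap\mu_{3,1}=\{1\}$. An element of this intersection is at once a word in the cluster automorphisms $\sigma_2,\sigma_3$, which send cluster variables to cluster variables, and a torus element sending $y_2\mapsto\mu y_2$; as no nontrivial scalar multiple of $y_2$ is a cluster variable, this forces $\mu=1$. Assembling the three steps yields the split exact sequence
$$1\longrightarrow \mu_{3,1}\longrightarrow \Aut(X(3,1))\longrightarrow D_8\longrightarrow 1,$$
with $\langle\sigma_2,\sigma_3\rangle\cong D_8$ as a section, that is $\Aut(X(3,1))\cong\mu_{3,1}\rtimes D_8$; a short conjugation computation (namely that $\sigma_3$ sends $\mu\in\mu_{3,1}$ to $\mu^{-1}$) shows the action is by inversion, so the product is genuinely non-direct, matching the $\rtimes$ in the statement. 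I expect the delicate step to be the kernel computation: one must argue that fixing the \emph{four} curves of the square forces the descended map to fix each of the \emph{three} curves of the triangle individually, so that Lemma~\ref{Lem:AutoBoundA1} applies and pins the kernel down to exactly $\mu_{3,1}$.
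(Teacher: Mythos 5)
Your proof is correct and follows essentially the same route as the paper: reduce to $\Aut(Y(3,1),B_Y)$ via Lemma~\ref{Lem:ExtXa1}, use the action on the four boundary curves to get an exact sequence whose kernel is $\mu_{3,1}$ (identified by contracting $\mathcal{E}_7'$ and invoking Lemma~\ref{Lem:AutoBoundA1}) and whose image is $D_8$, generated by the two reflections coming from $\sigma_2$ and $\sigma_3$. The only cosmetic differences are that the paper derives the boundary actions from Lemma~\ref{Lem:AutoBoundA1} and Example~\ref{Exa:Fibsigma3} rather than ``reading them off'' the pictures (which are themselves part of the claim), and that it settles injectivity of $\langle\sigma_2,\sigma_3\rangle\to D_8$ by the explicit formulas (equivalently, the period-$8$ cluster periodicity giving $(\sigma_2\sigma_3)^4=1$) instead of your cluster-variable proportionality argument.
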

\begin{proof}
According to Lemma~\ref{Lem:ExtXa1}, we have $\Aut(X(3,1))=\Aut(Y(3,1),B_Y)$. There is thus an action of $X(3,1)$ onto the set $\{\mathcal{E}_1',\mathcal{E}_3',\mathcal{E}_5',\mathcal{E}_7'\}$. The kernel corresponds to automorphisms of $(T(3,1),B_T)$ acting trivially on the triangle and is thus equal to $\mu_{a,3}$ by Lemma~\ref{Lem:AutoBoundA1}. We obtain  an exact sequence
$$1\to \mu_{a,3}\to \Aut(X(3,1))\to G\to 1,$$
where $G$ is a subgroup of the diedral group $D_8$.
By Lemma~\ref{Lem:AutoBoundA1}, $\sigma_2$ exchanges $\mathcal{E}_1$ and $\mathcal{E}_5$, so corresponds to the transposition $\mathcal{E}_1'\leftrightarrow \mathcal{E}_5'$.  The map $\sigma_3$ exchanges $\mathcal{E}_1$ and $\mathcal{E}_3$ (see Example~\ref{Exa:Fibsigma3}), so corresponds to the permutation  $\mathcal{E}_1'\leftrightarrow \mathcal{E}_3', \mathcal{E}_5'\leftrightarrow \mathcal{E}_7'$.

The map $\sigma_2\sigma_3$ corresponds thus to the permutation $\mathcal{E}_1'\to \mathcal{E}_3'\to \mathcal{E}_5'\to \mathcal{E}_7'$, so the map $\Aut(X(3,1))$ induces a surjective morphism $\langle \sigma_2,\sigma_3\rangle\to D_8$. The explicit formulas for $\sigma_2,\sigma_3$ imply that it is injective.
\end{proof}

\begin{remark} As in the previous two cases, $X(3,1)$ admits a compactification $X$ isomorphic to a cubic surface
$$x_1x_2x_3-x_0^2x_4-x_3^2-x_0^3 = 0.$$
The boundary consists of the union of the line $\ell:x_0 = x_2 = 0$ and the conic $C:x_0 = x_1x_3-x_2^2 = 0$. The points $[0:1:0:0]$ and $[0:0:1:0]$ are singular. The first point is an ordinary node, the second one is of type $A_2$.

 Let $X'\to X$ be a minimal resolution of singularities. The preimage of the boundary is a pentagon of type $(-1,-2,0,-2,-2)$. After we blow up the point $E_2\cap E_3$, and then blow down the curves $E_1,E_5$, we obtain a del Pezzo surface of degree 4 containing $X(3,1)$ with the boundary $B$ equal to a quadrangle of four $(-1)$-curves. This is our compactification $Y(3,1)$. It is known that a del Pezzo surface of degree 4 contains 16 lines in its anti-canonical embedding in $\bbP^4$. There are 40 quadrangles among them, and the Weyl group of type $W(D_5)$ of order $2^4.5!$ acts transitively on this set with the stabilizer isomorphic to the group $\frakS_3\rtimes D_8$ of order $48$, the normalizer of the subgroup $\frakS_3$ of $\frakS_5$.  Our group of automorphisms of $X(3,1)$ is a subgroup of this groups of index 2.
 \end{remark}

\section{Compactifications of $X(a,b)$ with $a,b\ge 2$, with a square}\label{SecSquare}
Let us now study the general case $X(a,b)$ with $a,b\ge 2$ (other cases were  treated in Sections \ref{SecTriangle} and \ref{abg}). The only $(-1)$-curves of the pair $Z(a,b)$ are then $E_4,E_5,E_1$. Denote by $\eta\colon (Z(a,b),B_Z)\to (S(a,b),B_S)$ the birational morphism of pairs which contracts the curve $E_5$.
 The boundary $B_S$ of $X(a,b)$ in $S(a,b)$ consists of a square $\eta(E_1)+\eta(E_2)+\eta(E_3)+\eta(E_4)={\mathcal{E}}_1'+{\mathcal{E}}_2'+{\mathcal{E}}_3'+{\mathcal{E}}_4'$ of type $(0,-(b-1),-(a-1),0)$.

  \begin{center}
\xy 
(-75,5)*{};
(-31,-2)*{};(-22,11)*{}**\dir{-};(-29,6)*{-a};(-25,10)*{};(-10,5)*{}**\dir{-};(-17,10)*{-1};(-12,8)*{};(-12,-8)*{}**\dir{-};(-9,0)*{-1};(-10,-5)*{};(-25,-10)*{}**\dir{-};(-17,-10)*{-1};(-22,-11)*{};(-31,2)*{}**\dir{-};(-29,-6)*{-b};
(-24,3)*{E_3};(-18,5)*{E_4};(-15,0)*{E_5};(-18,-5)*{E_1};(-24,-3)*{E_2};
(0,0)*{\longrightarrow};
(8,-2)*{};(22,12)*{}**\dir{-};(8,7)*{-(\!a\!-\!1\!)};(17,3)*{\mathcal{E}_3'};(18,12)*{};(32,-2)*{}**\dir{-};(27,7)*{0};(23,3)*{\mathcal{E}_4'};(32,2)*{};(18,-12)*{}**\dir{-};(27,-7)*{0};(23,-3)*{\mathcal{E}_1'};(22,-12)*{};(8,2)*{}**\dir{-};(8,-7)*{-(\!b\!-\!1\!)};(17,-3)*{\mathcal{E}_2'};
 \endxy
 \end{center}
 
 Since the square is standard  (because $a,b\ge 2$), we can apply Proposition~\ref{Prop:Ngons} to describe the automorphism group of $X(a,b)$. 
The description of $S(a,b)$ is given by Corollary~\ref{Cor:Pentagon}:
 \begin{lemma}\label{Lemma:BlowUpSquare}
 The smooth projective surface $S(a,b)$ is the blow-up $\pi'\colon S(a,b)\to \p^1\times \p^1$ of the $a+b$ points 
$$\{([\xi :1],[0:1])\mid\xi^a+1=0\},\hspace{0.5 cm}\{([0:1],[\xi :1])\mid\xi^b+1=0\}$$
and the boundary $S(a,b)\setminus X(a,b)$ consists of the strict transform of the curves $\pi'(\mathcal{E}_1')=\p^1\times [0:1]$, $\pi'(\mathcal{E}_2')=[0:1]\times \p^1$, $\pi'(\mathcal{E}_3')=\p^1 \times [0:1]$ and $\pi'(\mathcal{E}_4')=[1:0]\times \p^1$.
\begin{center}
\xy 
(-50,25)*{};
 (15,-1.5)*{};(15,21.5)*{}**\dir{-};(18,10)*{0};(20,14)*{\pi'(\mathcal{E}_4')};
  (18.5,20)*{};(-6.5,20)*{}**\dir{-};(10,23)*{0};(3,23)*{\pi'(\mathcal{E}_3')};
 (-5,-2)*{};(-5,22)*{}**\dir{-};(-11,7)*{\pi'(\mathcal{E}_2')};(-8,12)*{0};
 (-7,0)*{};(15,0)*{}**\dir{-};(1,-3)*{0};(9,-3)*{\pi'(\mathcal{E}_1')};
 (7,20)*{\bullet};(9,20)*{\bullet};(1,20)*{\bullet};(2.5,20.5)*{\cdot};(3.5,20.5)*{\cdot};(4.5,20.5)*{\cdot}; 
 (-5,8)*{\bullet};(-5,6)*{\bullet};(-5,14)*{\bullet};(-4.5,11.5)*{\cdot};(-4.5,10.5)*{\cdot};(-4.5,9.5)*{\cdot};
 (30,10)*{\longleftarrow}; (30,13)*{\pi'};
 (60,-1.5)*{};(60,21.5)*{}**\dir{-};(63,10)*{0};(63,14)*{\mathcal{E}_4'};
  (63.5,20)*{};(38.5,20)*{}**\dir{-};(54,23)*{-a};(47,23)*{\mathcal{E}_3'};
 (40,-2)*{};(40,22)*{}**\dir{-};(37,8)*{{\mathcal{E}}_2'};(37,12)*{-b};
 (38,0)*{};(62,0)*{}**\dir{-};(48,-3)*{0};(52,-3)*{{\mathcal{E}}_1'};
 \endxy
 \end{center}
Moreover, the restriction of $\pi'$ to the affine surface $X(a,b)$ is given by 
$$(y_1,y_2,y_3,y_4)\mapsto ([y_2:1],[y_3:1]).$$
 \end{lemma}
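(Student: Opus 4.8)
The plan is to follow closely the strategy of Lemma~\ref{Lemma:BlowUpTriangle}, the only difference being that here $\eta$ contracts a single curve $E_5$ rather than two. I realise $S(a,b)$ over $\p^1\times\p^1$ by composing the known description of $Z(a,b)$ over $\p^2$ with an explicit birational map $\kappa\colon\p^2\dashrightarrow\p^1\times\p^1$.

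First I recall from Corollary~\ref{Cor:Pentagon} that $\pi\colon Z(a,b)\to\p^2$ is the blow-up of the $a+b+2$ points $\{[1:0:\xi]\mid\xi^b+1=0\}$, $\{[1:\lambda:0]\mid\lambda^a+1=0\}$, $[0:1:0]$ and $[0:0:1]$; moreover $E_5$ is the strict transform of the coordinate line $L_0=\{x_0=0\}$, and $E_1$, $E_4$ are the exceptional divisors over $[0:0:1]$ and $[0:1:0]$, two points lying on $L_0$. I then introduce
$$\kappa\colon\p^2\dashrightarrow\p^1\times\p^1,\qquad [x_0:x_1:x_2]\dashmapsto([x_1:x_0],[x_2:x_0]).$$
Its two components are the projections away from $[0:0:1]$ and from $[0:1:0]$, so $\kappa$ is exactly the blow-up of these two points followed by the contraction of the strict transform of the line $L_0$ joining them.

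The core of the argument is to verify that $\pi'=\kappa\circ\pi\circ\eta^{-1}$ is a genuine morphism $S(a,b)\to\p^1\times\p^1$ realising the asserted blow-up. Writing $\kappa=c\circ b^{-1}$, where $b$ blows up $[0:0:1],[0:1:0]$ and $c$ contracts the strict transform of $L_0$, the key observation is that $\pi$ factors as $b\circ\rho$ with $\rho$ the blow-up of the remaining $a+b$ points; hence $\kappa\circ\pi=c\circ\rho$ is already a morphism on $Z(a,b)$. This morphism contracts exactly $E_5$ (the strict transform of $L_0$) together with the $a+b$ disjoint $(-1)$-curves lying over $\{[1:\lambda:0]\}$ and $\{[1:0:\xi]\}$; factoring out the contraction $\eta$ of $E_5$ shows that $\pi'$ is the blow-up of $\p^1\times\p^1$ at the $a+b$ points $\kappa([1:\lambda:0])=([\lambda:1],[0:1])$ and $\kappa([1:0:\xi])=([0:1],[\xi:1])$, which are the centres claimed in the statement.

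It then remains to read off the boundary and the restriction to $X(a,b)$. Evaluating $\kappa$ on the two coordinate lines gives $\pi'(\mathcal{E}_2')=\kappa(\{x_1=0\})=[0:1]\times\p^1$ and $\pi'(\mathcal{E}_3')=\kappa(\{x_2=0\})=\p^1\times[0:1]$, while the exceptional divisors $E_1$, $E_4$ are sent by $\kappa$ to the two remaining rulings $\p^1\times[1:0]$ and $[1:0]\times\p^1$; this also checks that the $a$ centres lie on $\pi'(\mathcal{E}_3')$ and the $b$ centres on $\pi'(\mathcal{E}_2')$, consistent with the self-intersections $-a,-b$ recorded in the picture. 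Finally, Proposition~\ref{Prop:TriangleBarX} gives $\pi|_{X(a,b)}\colon(y_1,y_2,y_3,y_4)\mapsto[1:y_2:y_3]$, so
$$\pi'|_{X(a,b)}(y_1,y_2,y_3,y_4)=\kappa([1:y_2:y_3])=([y_2:1],[y_3:1]),$$
as required. The one point needing care is the base-point bookkeeping showing that $\pi'$ is everywhere defined and contracts precisely $a+b$ curves; but this is exactly the (slightly simpler) analogue of the triangle computation, since $\eta$ now blows down only one curve.
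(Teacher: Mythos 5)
Your proof is correct and takes essentially the same route as the paper: both realise $\pi'$ by composing the blow-up $\pi\colon Z(a,b)\to\p^2$ of Corollary~\ref{Cor:Pentagon} with the birational map $\kappa\colon[x_0:x_1:x_2]\dashmapsto([x_1:x_0],[x_2:x_0])$, your only addition being the explicit factorization argument ($\pi=b\circ\rho$, $\kappa\circ\pi=c\circ\rho$) showing that $\kappa\circ\pi\circ\eta^{-1}$ is everywhere defined, a point the paper leaves implicit. (Incidentally, your identification $\pi'(\mathcal{E}_1')=\p^1\times[1:0]$ silently corrects a typo in the statement, which lists both $\pi'(\mathcal{E}_1')$ and $\pi'(\mathcal{E}_3')$ as $\p^1\times[0:1]$.)
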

 \begin{proof}Recall that the birational morphism $\pi\colon Z(a,1)\to \p^2$ of Corollary~\ref{Cor:Pentagon} is the blow-up of the $a+b+2$ points
 $$\{[1:0:\xi]\mid \xi^b+1 = 0\} ,  \{[1:\lambda:0]\mid \lambda^a+1 = 0\}, [0:1:0], [0:0:1].$$
 Composing $\pi$ with the birational map $\kappa\colon \p^2\dasharrow \p^1\times \p^1$  given by $$[x_0:x_1:x_2]\dasharrow ([x_1:x_0],[x_2:x_0]),$$ which blows-up $[0:1:0], [0:0:1]$ and contracts $\pi(E_5)$, we obtain $\pi'$, which is the blow-up of
 $$\{\kappa([1:0:\xi])\mid \xi^b+1 = 0\} ,  \{\kappa([1:\lambda:0])\mid \lambda^a+1 = 0\}.$$
The restriction of  $\pi\colon Z(a,b)\to \p^2$ being $(y_1,y_2,y_3,y_4)\mapsto (1:y_2:y_3)$ (see the proof of Proposition~\ref{Prop:TriangleBarX}), the restriction of $\pi'$ to $X(a,b)$ is 
 $$(y_1,y_2,y_3,y_4)\mapsto \kappa([1:y_2:y_3])=([y_2:1],[y_3:1]).$$\end{proof}

 \begin{lemma}\label{Lem:AutoBoundAB}
The action of the group $\Aut(S(a,b),B_S)$ of automorphisms of the pair $(S(a,b),B_S)$ on the set $\{\mathcal{E}_1',\mathcal{E}_2',\mathcal{E}_3',\mathcal{E}_4'\}$ gives a split exact sequence 
$$1\longrightarrow \mu_{a,b} \to \Aut(S(a,b),B_S)\to H_{a,b}\to 1,$$
where $\mu_{a,b}\simeq \{(\mu,\nu) \in (\k^*)^2\mid \mu^a=\nu^b=1\}$, and $H_{a,b}$ is trivial if $a\not=b$ and isomorphic to $\mathbb{Z}/2\mathbb{Z}$ if $a=b$.

 The group $\mu_{a,b}$ acts on $X(a,b)$ via
 $$(y_1,y_2,y_3,y_4)\mapsto (\nu^{-1}y_1,\mu y_2,\nu y_3,\mu ^{-1}y_4).$$
 
 The group $H_{a,a}$ corresponds to the subgroup of $\Aut(X(a,a))$ generated by 
$$(y_1,y_2,y_3,y_4)\mapsto (y_4,y_3,y_2,y_1).$$
 \end{lemma}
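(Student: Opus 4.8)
The plan is to analyze $\Aut(S(a,b),B_S)$ through its action on the standard square $B_S$, exactly paralleling the triangle case of Lemma~\ref{Lem:AutoBoundA1}. First I would define $\mu_{a,b}$ as the kernel of the action of $\Aut(S(a,b),B_S)$ on the four boundary curves $\{\mathcal{E}_1',\mathcal{E}_2',\mathcal{E}_3',\mathcal{E}_4'\}$. Using Lemma~\ref{Lemma:BlowUpSquare}, which realizes $S(a,b)$ as the blow-up $\pi'\colon S(a,b)\to \p^1\times \p^1$ at the $a+b$ points on the two fixed fibres, I would argue that any element of $\mu_{a,b}$ descends to an automorphism of $\p^1\times\p^1$ preserving each of the four rulings-curves $\pi'(\mathcal{E}_i')$ and preserving the blown-up set. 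The key observation (as in Lemma~\ref{Lem:AutoBoundA1}) is that the $a+b$ exceptional curves are characterized intrinsically: each is an irreducible curve meeting none of the boundary curves, hence its image under $\pi'$ is a point. The automorphisms of $\p^1\times\p^1$ fixing all four coordinate-type curves are exactly the diagonal torus $([u_1:u_2],[v_1:v_2])\mapsto ([\mu u_1:u_2],[\nu v_1:v_2])$, and the condition of preserving the two finite point-sets $\{\xi^a+1=0\}$ on one fibre and $\{\xi^b+1=0\}$ on the other forces $\mu^a=\nu^b=1$. Tracing this back through the formula $(y_1,y_2,y_3,y_4)\mapsto([y_2:1],[y_3:1])$ and the defining relations gives the stated action on $X(a,b)$, matching the group $\mu_{a,b}$ from the introduction.

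Next I would identify the quotient group $H_{a,b}$, the image of $\Aut(S(a,b),B_S)$ in the symmetry group of the square. The type of the square is $(0,-(b-1),-(a-1),0)$, so any pair-automorphism must permute the curves preserving self-intersections: the two $(0)$-curves $\mathcal{E}_1',\mathcal{E}_4'$ can only map to $(0)$-curves and the curves $\mathcal{E}_2',\mathcal{E}_3'$ of self-intersection $-(b-1),-(a-1)$ can only be permuted among those of equal self-intersection. When $a\neq b$ every curve has a distinct self-intersection among its cyclic neighbours, so the only admissible nontrivial square symmetry is excluded and $H_{a,b}$ is trivial. When $a=b$ the square has type $(0,-(a-1),-(a-1),0)$, which admits one nontrivial symmetry, the reflection exchanging $\mathcal{E}_1'\leftrightarrow\mathcal{E}_4'$ and $\mathcal{E}_2'\leftrightarrow\mathcal{E}_3'$, giving $H_{a,a}\simeq\mathbb{Z}/2\mathbb{Z}$.

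To show $H_{a,a}$ is actually realized (surjectivity) and to produce the explicit splitting, I would exhibit the automorphism of $\p^1\times\p^1$ that swaps the two factors and sends each coordinate point to its mirror, namely $([u_1:u_2],[v_1:v_2])\mapsto([v_1:v_2],[u_1:u_2])$. Because $a=b$, this swap sends the set $\{([\xi:1],[0:1])\}$ to $\{([0:1],[\xi:1])\}$ and preserves the full blown-up configuration, hence lifts to an automorphism of $S(a,a)$ preserving $B_S$ and inducing the required reflection. Reading off its effect via $(y_1,y_2,y_3,y_4)\mapsto([y_2:1],[y_3:1])$ shows it interchanges $y_2\leftrightarrow y_3$ and, after using the cluster relations $y_1y_3=y_2^a+1$ and $y_2y_4=y_3^a+1$ to track $y_1,y_4$, yields exactly $(y_1,y_2,y_3,y_4)\mapsto(y_4,y_3,y_2,y_1)$; this element has order $2$ and provides the splitting, establishing the semidirect (here direct-on-the-torus) structure.

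The main obstacle I anticipate is the surjectivity and splitting in the $a=b$ case, specifically verifying that the formal square-symmetry is induced by a genuine automorphism of $\p^1\times\p^1$ compatible with the blown-up points and then computing its precise action on the four affine coordinates $y_i$. The torus part $\mu_{a,b}$ is routine once the intrinsic characterization of the exceptional curves is in place, but confirming that no further symmetries sneak in (for instance, that the apparent symmetry for $a\neq b$ really fails to preserve the point configuration even set-theoretically) and carrying the swap-involution cleanly back to the formula $(y_1,y_2,y_3,y_4)\mapsto(y_4,y_3,y_2,y_1)$ through the relations is where the care is needed.
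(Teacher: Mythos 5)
Your proposal follows essentially the same route as the paper's proof: define $\mu_{a,b}$ as the kernel of the action on the four boundary curves, descend it to the diagonal torus of $\p^1\times\p^1$ by showing the exceptional curves of $\pi'$ are permuted, rule out nontrivial square symmetries for $a\neq b$ via self-intersections, and realize $H_{a,a}$ together with the splitting by the factor-swap involution of $\p^1\times\p^1$, which indeed restricts to $(y_1,y_2,y_3,y_4)\mapsto(y_4,y_3,y_2,y_1)$. One small wording correction: the exceptional curves are not disjoint from the boundary (each touches $\mathcal{E}_2'$ or $\mathcal{E}_3'$); the accurate statement, as in Lemma~\ref{Lem:AutoBoundA1}, is that the image of such a curve under a kernel automorphism is disjoint from the boundary curves mapping to fibres of the two rulings, hence is contracted by $\pi'$.
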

 \begin{proof}
 Denote by $\mu_{a,b}$ the kernel of the action of $\Aut(S(a,b),B_S)$ on the set $\{\mathcal{E}_1',\mathcal{E}_2',\mathcal{E}_3',\mathcal{E}_4'\}$. Let us observe that the set of $a$ curves contracted by $\pi'$ and touching $\mathcal{E}_3$ is invariant by $\mu_{a,b}$. Indeed, the image by $\mu_{a,b}$ of one of the curves is an irreducible curve, not intersecting $\mathcal{E}_1'$ and $\mathcal{E}_2'$. The image of this curve by $\pi'$ does not intersect the two fibres $\pi'(\mathcal{E}_1')=\p^1\times [0:1]$, $\pi'(\mathcal{E}_2')=[0:1]\times\p^1$, so is a point. The same argument works for the $b$ curves contracted by $\pi'$ and touching $\mathcal{E}_2'$.
 
 The group $\mu_{a,b}$ is then the lift of automorphisms of $\p^1\times\p^1$ which leave invariant the four curves $\pi'(\mathcal{E}_i')$, $i=1,\dots,4$ and which preserve the sets 
$\{([\xi :1],[0:1])\mid\xi^a+1=0\},\{([0:1],[\xi :1])\mid\xi^b+1=0\}.$ This group is isomorphic to $ \{(\mu,\nu) \in (\k^*)^2\mid \mu^a=\nu^b=1\}$, acts on  $\p^1\times \p^1$ via
$$([u_1:u_2],[v_1:v_2])\mapsto ([\mu u_1:u_2],[\nu v_1:v_2])$$
 
 and then on $X(a,b)=\Spec \k[y_1,y_2,y_3,y_4]/(y_1y_3-y_2^a\ -1,y_2y_4-y_3^b-1)$ via $$(y_1,y_2,y_3,y_4)\mapsto (\nu^{-1}y_1,\mu y_2,\nu y_3,\mu^{-1} y_4).$$
 
 If $a\not=b$, the action on the set of four curves of $B_S$ is trivial, because the self-intersections have to be preserved. 
 
 If $a=b$, the explicit description of $\pi'\colon S(a,1)\to \p^1\times \p^1$ given in Lemma~\ref{Lemma:BlowUpSquare} shows that the automorphism 
$$([u_1:u_2],[v_1:v_2])\mapsto ([v_1:v_2],[u_1:u_2])$$
of $\p^1\times \p^1$ lifts to an automorphism of $T(a,1)$ which preserves the boundary, exchanging the two $(0)$-curves, and the two $(-a)$-curves.
In affine coordinates, this gives the following automorphism of $X(a,1)\subset \A^4$
$$(y_1,y_2,y_3,y_4)\mapsto \left(y_4,y_3,y_2,y_1\right).$$ \end{proof}

\begin{example}\label{Exa:Fibsigma2ab}
The following birational involution of $\p^1\times\p^1$
$$f\colon ([u_1:u_2],[v_1:v_2])\dashmapsto ([u_1:u_2],[(u_1^a+u_2^a)v_2:v_1u_2^a])$$
is not defined only at $([1:0],[1:0])$ and at $\{([\xi:1],[0:1])\mid \xi^a+1=0\}.$ On the open subset $U\subset \p^1\times\p^1$ where $u_2=1$, we obtain the birational map 
$$([x:1], [v_1:v_2])\dasharrow ([x:1], [(x^a+1)v_2:v_1])$$
whose base-points are $\{([\xi:1],[0:1])\mid \xi^a+1=0\}.$ Hence, the blow-up $\hat{U}\to U$ of these points conjugates $f$ to an automorphism of $\hat{U}$. Since $f$ preserves the set $\{([0:1],[\xi :1])\mid\xi^b+1=0\}$, which is the set of remaining points blown-up by $\pi'$, the map $\pi'$ conjugates $f$ to a birational map $\hat{f}=(\pi')^{-1}f\pi'$ of $S(a,b)$ which restricts to an automorphism  of $S(a,b)\setminus {\mathcal{E}}_4'=(\pi')^{-1}(U)$, and which exchanges $\mathcal{E}_1'$ and $\mathcal{E}_3'$.  

 Since $a\not=0$, the map $\hat{f}$ is not an isomorphism, and is thus a fibered modification $(S(a,b),B_S)\dasharrow (S(a,b),B_S)$. Moreover, $\hat{f}$ restricts to an automorphism of $X(a,b)$, that we will show to be equal to $\sigma_2$.

To compute this, we use the map $X(a,b)\to \p^1\times \p^1$ given by $(y_1,y_2,y_3,y_4)\mapsto  ([y_2:1],[y_3:1])$. The composition with $f$ yields
$$
([y_2:1],[y_3:1])\dashmapsto ([y_2:1],[(y_2^a+1):y_3])=([y_2:1],[y_1:1]).$$
Hence, $y_3$ is exchanged with $y_1$. The involutive automorphism of $X(a,1)$ is thus given by 
$$\sigma_2\colon (y_1,y_2,y_3,y_4)\mapsto (y_3,y_2,y_1,y_1^by_4-y_2^{a-1}\sum_{i=0}^{b-1} (y_1y_3)^i).$$

Similarly, the birational involution of $\p^1\times\p^1$
$$([u_1:u_2],[v_1:v_2])\dashmapsto ([(v_1^b+v_2^b)u_2:u_1v_2^b],[v_1:v_2])$$
yields a fibered modification $(S(a,b),B_S)\dasharrow (S(a,b),B_S)$ which restricts to an automorphism of $S(a,b)\setminus {\mathcal{E}}_1'$ and to the automorphism $\sigma_3$ of $X(a,b)$.

In particular, if  $\psi \colon (S(a,b),B_S)\dasharrow (Y',B')$ is a fibered modification, there is an isomorphism $\tau \colon (Y',B')\to (S(a,b),B_S)$ such that $\tau\psi$ restricts to $\sigma_2$ or $\sigma_3$ on $X(a,b)=S(a,b)\setminus B_S$.
\end{example}

\begin{proposition}\label{GenCase}
If $a,b\ge 2$, then $\Aut(X(a,b))=(\mu_{a,b}\rtimes \langle \sigma_2,\sigma_3\rangle)\rtimes H_{a,b}$, where $\mu_{a,b}$ and $H_{a,b}$ are as in Lemma~$\ref{Lem:AutoBoundAB}$.

Moreover, $\langle \sigma_2,\sigma_3\rangle= \langle \sigma_2\rangle \star \langle \sigma_3\rangle= \mathbb{Z}/2\mathbb{Z} \star \mathbb{Z}/2\mathbb{Z}=\mathbb{Z}\rtimes \mathbb{Z}/2\mathbb{Z}$ is an infinite diedral group, $\mu_{a,b}$ is a finite abelian group, $H_{a,b}$ is trivial if $a\not=b$ and or order $2$ if $a=b$.
\end{proposition}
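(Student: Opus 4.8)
The plan is to run the argument of Proposition~\ref{Prop:Xa14} with the standard square $(S(a,b),B_S)$ of Section~\ref{SecSquare} in the role of the triangle, and then to read off the precise semidirect-product structure from the action on the four boundary curves. \emph{Generation:} since $a,b\ge 2$, the square $(S(a,b),B_S)$ is standard, so by Proposition~\ref{Prop:Ngons} every element of $\Aut(X(a,b))$, regarded as a birational self-map of the pair $(S(a,b),B_S)$, is an alternating composition of isomorphisms of pairs and fibered modifications. By Example~\ref{Exa:Fibsigma2ab} each fibered modification becomes $\sigma_2$ or $\sigma_3$ after composition with an isomorphism of pairs, and by Lemma~\ref{Lem:AutoBoundAB} the isomorphisms of pairs form $\mu_{a,b}\rtimes H_{a,b}$. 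Hence $\Aut(X(a,b))$ is generated by $\mu_{a,b}$, $\sigma_2$, $\sigma_3$ and, when $a=b$, the generator $w\colon (y_1,y_2,y_3,y_4)\mapsto (y_4,y_3,y_2,y_1)$ of $H_{a,a}$.

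\emph{Relations and normality.} I would then compute the conjugation action on $\mu_{a,b}$, most conveniently in the model $\pi'\colon S(a,b)\to \p^1\times \p^1$ of Lemma~\ref{Lemma:BlowUpSquare}: using $\mu^a=\nu^b=1$ one gets $\sigma_2(\mu,\nu)\sigma_2=(\mu,\nu^{-1})$, $\sigma_3(\mu,\nu)\sigma_3=(\mu^{-1},\nu)$ and, for $a=b$, $w(\mu,\nu)w=(\nu,\mu)$, together with $w\sigma_2 w=\sigma_3$ and $\sigma_2^2=\sigma_3^2=w^2=1$. These identities exhibit $\mu_{a,b}$ as a normal subgroup of the whole group, normalized by $\sigma_2,\sigma_3$ and $w$, and show that $w$ normalizes $\langle \sigma_2,\sigma_3\rangle$.

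\emph{Disjointness of the factors.} To prove $\langle \sigma_2,\sigma_3\rangle\cong D_{\infty}$ I would argue as in Proposition~\ref{Prop:Xa14} that $\sigma_2\sigma_3$ and $\sigma_3\sigma_2$ have distinct unique proper indeterminacy points and that, by induction, $(\sigma_2\sigma_3)^n$ retains a unique proper indeterminacy point for all $n\ge 1$, so $\sigma_2\sigma_3$ has infinite order; with $\sigma_2^2=\sigma_3^2=1$ this gives the infinite dihedral group. The key additional tool is the homomorphism $\rho\colon \Aut(X(a,b))\to \frakS_4$ recording the permutation induced on the set $\{\mathcal{E}_1',\mathcal{E}_2',\mathcal{E}_3',\mathcal{E}_4'\}$ of boundary components, whose image lies in the symmetry group of the $4$-cycle. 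One has $\rho(\mu_{a,b})=1$; moreover $\sigma_2$ exchanges $\mathcal{E}_1',\mathcal{E}_3'$ while fixing $\mathcal{E}_2',\mathcal{E}_4'$, and $\sigma_3$ exchanges $\mathcal{E}_2',\mathcal{E}_4'$ while fixing $\mathcal{E}_1',\mathcal{E}_3'$, so $\rho(\langle \sigma_2,\sigma_3\rangle)$ is the Klein four-group generated by these two commuting reflections of the square; whereas $\rho(w)$ is the permutation exchanging $\mathcal{E}_1'\leftrightarrow \mathcal{E}_4'$ and $\mathcal{E}_2'\leftrightarrow \mathcal{E}_3'$, an edge-reflection lying outside that Klein four-group. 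It follows at once that $w\notin \mu_{a,b}\rtimes\langle \sigma_2,\sigma_3\rangle$. Finally, any element of $\mu_{a,b}\cap \langle \sigma_2,\sigma_3\rangle$ lies in $\ker(\rho)\cap \langle \sigma_2,\sigma_3\rangle$; every reflection of $D_{\infty}$ maps to a nontrivial element of the Klein four-group (namely $\rho(\sigma_2)$ or $\rho(\sigma_3)$), so this intersection contains no reflection, hence is the torsion-free (infinite cyclic) subgroup $\langle (\sigma_2\sigma_3)^2\rangle$, and therefore meets the finite group $\mu_{a,b}$ trivially. Combining these facts with the relations of the previous step yields $\Aut(X(a,b))=(\mu_{a,b}\rtimes\langle \sigma_2,\sigma_3\rangle)\rtimes H_{a,b}$, which collapses to $\mu_{a,b}\rtimes\langle \sigma_2,\sigma_3\rangle$ when $a\ne b$, since then $H_{a,b}$ is trivial.

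\emph{Main obstacle.} The generation step and the conjugation computations are routine once the earlier results are in hand; the delicate point is the homomorphism $\rho$. One must check that a birational map of the pair genuinely permutes the four boundary components, so that $\rho$ is well defined, and, crucially, pin down that $\sigma_2$ induces the diagonal reflection fixing $\mathcal{E}_2',\mathcal{E}_4'$ rather than the $180^{\circ}$ rotation. This is what places $w$ outside the Klein four-group and hence makes $H_{a,a}$ a genuine extra factor; it is the crux of the whole structure statement.
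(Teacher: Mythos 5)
Your generation step, your conjugation formulas, and your infinite-order argument for $\sigma_2\sigma_3$ coincide with the paper's proof (Proposition~\ref{Prop:Ngons} plus Example~\ref{Exa:Fibsigma2ab} and Lemma~\ref{Lem:AutoBoundAB}, then the indeterminacy-point induction borrowed from Proposition~\ref{Prop:Xa14}), and those parts are correct. The gap is exactly at the point you yourself single out as the crux: the homomorphism $\rho\colon \Aut(X(a,b))\to\frakS_4$ does not exist. An automorphism of $X(a,b)$ extends only to a \emph{birational} self-map of the pair $(S(a,b),B_S)$, and a fibered modification does not permute the four boundary curves: it contracts one of them. Concretely, by the explicit formulas of Example~\ref{Exa:Fibsigma2ab}, $\sigma_2$ restricts to an automorphism of $S(a,b)\setminus \mathcal{E}_4'$ and contracts $\mathcal{E}_4'$ to the point of $S(a,b)$ lying over $([1:0],[1:0])$, while $\sigma_3$ restricts to an automorphism of $S(a,b)\setminus \mathcal{E}_1'$ and contracts $\mathcal{E}_1'$. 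So the claims ``$\sigma_2$ fixes $\mathcal{E}_2',\mathcal{E}_4'$'' and ``$\sigma_3$ fixes $\mathcal{E}_1',\mathcal{E}_3'$'' fail in the crucial place: the contracted curve has no strict transform at all, and since strict transforms are not functorial under composition of birational maps (a curve contracted at an intermediate stage cannot be followed further), no permutation is attached to $\sigma_2$, to $\sigma_3$, or to a general element of $\Aut(X(a,b))$, and an ad hoc assignment cannot be proved multiplicative without already knowing a presentation of the group --- which is what is being proved. Both of your disjointness conclusions ($w\notin\mu_{a,b}\rtimes\langle\sigma_2,\sigma_3\rangle$ and $\mu_{a,b}\cap\langle\sigma_2,\sigma_3\rangle\subset\ker\rho$) rest on $\rho$, so this step collapses.

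The disjointness should instead be extracted from regularity, which is the route consistent with the paper's lemmas. The boundary-permutation action is well defined on genuine automorphisms of the pair, and Lemma~\ref{Lem:AutoBoundAB} already packages it: $\Aut(S(a,b),B_S)=\mu_{a,b}\rtimes H_{a,b}$, where $\mu_{a,b}$ fixes each component and $w$ does not, so in particular $\mu_{a,b}\cap H_{a,b}=1$. If $w=ms$ with $m\in\mu_{a,b}$ and $s\in\langle\sigma_2,\sigma_3\rangle$, then $s=m^{-1}w$ would be an automorphism of the pair, and likewise any element of $\mu_{a,b}\cap\langle\sigma_2,\sigma_3\rangle$ would be one; so everything reduces to showing that no nontrivial element of $\langle\sigma_2,\sigma_3\rangle$ is regular on $S(a,b)$. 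This is the indeterminacy-point induction, which however must be run for \emph{all} nontrivial alternating words in $\sigma_2,\sigma_3$, not only for the powers of $\sigma_2\sigma_3$ (once $\sigma_2\sigma_3$ has infinite order, $\langle\sigma_2,\sigma_3\rangle\cong D_\infty$ and the odd words are the reflections; each still carries a proper indeterminacy point at $\mathcal{E}_1'\cap\mathcal{E}_4'$). Note also that this regularity argument is insensitive to the field, whereas your conjugation relations on $\mu_{a,b}$ carry no information when $\mu_{a,b}$ is trivial, which happens over non-closed fields (e.g.\ $\k=\QQ$, $a=b=3$); so they could never substitute for the missing step.
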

\begin{proof}
Because $a,b\ge 2$, the pair $(S(a,b),B_S)$ is standard. According to Proposition~\ref{Prop:Ngons}, every automorphism of $X(a,b)$ decomposes into fibered modification and isomorphisms of pairs. Each fibered modification is equal to $\sigma_2$ or $\sigma_3$, up to isomorphism of pairs (Example~\ref{Exa:Fibsigma2ab}), and each automorphism of the pair $(S(a,b),B_S)$ is generated by $\mu_{a,b}$ and $H_{a,b}$ (Lemma~\ref{Lem:AutoBoundAB}). Hence, $\Aut(X(a,b))$ is generated by $\mu_{a,b}$, $H_{a,b}$, $\sigma_2$ and $\sigma_3$. 

In the case where $H_{a,b}$ is not trivial, i.e.\ when $a=b$, we observe that the involution normalises $\mu_{a,b}$ (sending $(\mu,\nu)$ onto $(\nu,\mu)$) and also 
$\langle \sigma_2,\sigma_3\rangle$ (exchanging $\sigma_2$ and $\sigma_3$). To achieve the proof, it remains to observe that $\sigma_2\sigma_3$ is of infinite order. This is of course follows from characterizations of cluster algebras $\calC(a,b)$ with finitely many clusters: they must be of types $A_2,B_2$, or $G_2$ \cite{Sherman}. However, we can give an independent proof. It is  exactly the same as the proof of Proposition~\ref{Prop:Xa14}: the map $\sigma_2\sigma_3$ and its inverse have both a unique proper indeterminacy point, and these two points are different. Proceeding by induction, we obtain that $(\sigma_2\sigma_3)^n$ has again a unique proper indeterminacy point for any $n\ge 1$, always being the proper indeterminacy point of $\sigma_2\circ\sigma_3$.
\end{proof}
\section{Isomorphisms between two surfaces}
We finish this note with the following result.
\begin{proposition}
Let $a,b,c,d\ge 1$. The surfaces $X(a,b)$ and $X(c,d)$ are isomorphic if and only if $(a,b)=(c,d)$ or $(a,b)=(d,c)$.
\end{proposition}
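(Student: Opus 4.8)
The implication ``$(a,b)\in\{(c,d),(d,c)\}\Rightarrow X(a,b)\cong X(c,d)$'' is immediate and I would dispatch it in one line: for $(a,b)=(c,d)$ the surfaces coincide, while for $(a,b)=(d,c)$ the coordinate permutation $(y_1,y_2,y_3,y_4)\mapsto(y_4,y_3,y_2,y_1)$ sends the ideal $(y_1y_3-y_2^a-1,\,y_2y_4-y_3^b-1)$ to $(y_2y_4-y_3^a-1,\,y_1y_3-y_2^b-1)$, hence defines an isomorphism $X(a,b)\to X(b,a)$ (this is the generator of $H_{a,a}$ in Lemma~\ref{Lem:AutoBoundAB} when $a=b$). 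For the converse the plan is to extract a numerical invariant of the \emph{abstract} affine surface from the standard $n$-gon compactifications. The crucial remark is that a fibered modification \emph{preserves the multiset of boundary self-intersections}: by Example~\ref{Exa:Ellink} it carries a type $(0,0,-a,-b_1,\dots,-b_{n-3})$ to $(-a,0,0,-b_1,\dots,-b_{n-3})$, which has the same underlying multiset. Since an isomorphism of pairs preserves the entire type, the decomposition of Proposition~\ref{Prop:Ngons} shows that \emph{any} birational map of pairs between two standard $n$-gon pairs forces both $m=n$ and the equality of multisets $\{E_1^2,\dots,E_n^2\}=\{(E'_1)^2,\dots,(E'_m)^2\}$. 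Finally, an isomorphism $X(a,b)\to X(c,d)$ composed with the open immersions into two chosen smooth anticanonical compactifications is, by definition, a birational map of those pairs; so whenever both surfaces admit standard $n$-gon compactifications, the two data ($n$ and the boundary multiset) must agree.

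With this invariant available I would split the converse according to type. By Theorem~\ref{TheThm}, $\Aut(X(a,b))$ is finite exactly when $ab\le 3$, and isomorphic surfaces have isomorphic automorphism groups, so $X(a,b)\cong X(c,d)$ forces $ab\le 3\Leftrightarrow cd\le 3$. If both are of finite type, the surface is, up to the swap above, one of $X(1,1),X(2,1),X(3,1)$, and Theorem~\ref{TheThm} gives $|\Aut|=10$, $12$, and ($8$ or $24$) respectively, the last ambiguity reflecting $|\mu_{3,1}|\in\{1,3\}$ according to whether $\k$ contains a primitive cube root of unity. Since these possible values lie in the pairwise disjoint sets $\{10\}$, $\{12\}$, $\{8,24\}$, the order of the automorphism group determines $ab\in\{1,2,3\}$; as each such value corresponds to a unique unordered pair $\{1,1\},\{2,1\},\{3,1\}$, we conclude $\{a,b\}=\{c,d\}$.

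If instead $ab>3$ and $cd>3$, I would invoke the multiset invariant. Call $X(a,b)$ of \emph{triangle type} when $\min(a,b)=1$ (so the other index is $\ge 4$) and of \emph{square type} when $a,b\ge 2$; every infinite-type surface is of exactly one kind. A triangle-type surface carries the standard triangle $T(a,1)$ of Section~\ref{SecTriangle}, whose boundary has multiset $\{0,0,-(a-2)\}$ and $n=3$; a square-type surface carries the standard square $S(a,b)$ of Lemma~\ref{Lemma:BlowUpSquare}, whose boundary has multiset $\{0,0,-a,-b\}$ and $n=4$. I would read these self-intersections directly off the blow-up of $\p^1\times\p^1$: the $a$, respectively $b$, points blown up on one ruling produce a $(-a)$-, respectively $(-b)$-curve, while the two remaining rulings stay $(0)$-curves. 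Were the two surfaces of different types, the induced birational map of pairs would join a standard triangle to a standard square, contradicting $m=n$; hence they share a type. Two square-type surfaces then give $\{0,0,-a,-b\}=\{0,0,-c,-d\}$, i.e.\ $\{a,b\}=\{c,d\}$, while two triangle-type surfaces $X(\alpha,1),X(\gamma,1)$ give $\{0,0,-(\alpha-2)\}=\{0,0,-(\gamma-2)\}$, i.e.\ $\alpha=\gamma$. This exhausts all cases.

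The main obstacle, and the conceptual core of the argument, is the well-definedness of the multiset invariant, that is, its independence of the chosen standard compactification; this is exactly what the combination of Proposition~\ref{Prop:Ngons} with the multiset-invariance of fibered modifications provides. I would emphasise that the automorphism group \emph{alone} does not separate the infinite-type surfaces: for instance $X(6,1)$ and $X(2,3)$ both have automorphism group of the shape $D_\infty\ltimes(\mathbb{Z}/6\mathbb{Z})$ over an algebraically closed field, yet are non-isomorphic. It is therefore essential that the distinguishing invariant in the infinite-type case be the geometric boundary multiset rather than the abstract group structure. A minor point to verify carefully is the precise self-intersections of $S(a,b)$, which I would compute from the $\p^1\times\p^1$ blow-up description of Lemma~\ref{Lemma:BlowUpSquare} to be certain the multiset is $\{0,0,-a,-b\}$ (consistent with $S(a,b)$ being standard precisely for $a,b\ge 2$).
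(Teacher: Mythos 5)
Your proof is correct and its skeleton is the paper's: the coordinate swap for the ``if'' direction, Theorem~\ref{TheThm} to reduce the converse to the case $ab,cd\ge 4$ (the finite-type automorphism groups being finite and pairwise non-isomorphic), and then the decomposition of Proposition~\ref{Prop:Ngons} together with invariance of the boundary data under isomorphisms of pairs and fibered modifications. Where you diverge, you are actually more careful than the paper. The paper's proof takes, for \emph{every} surface with $ab\ge 4$, a compactification by a ``standard square of type $(0,0,-a,-b)$''; but when $b=1$ this square is not standard in the paper's own sense (it contains a $(-1)$-curve where self-intersection $\le -2$ is required), so Proposition~\ref{Prop:Ngons} cannot be invoked for it as stated. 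Your split into triangle type ($\min(a,b)=1$, compactified by the standard triangle $T(a,1)$ of type $(0,0,-(a-2))$ from Section~\ref{SecTriangle}) and square type ($a,b\ge 2$, compactified by the standard square of Lemma~\ref{Lemma:BlowUpSquare}), with the equality $m=n$ of Proposition~\ref{Prop:Ngons} excluding an isomorphism between the two types, closes exactly this gap. Two further points of your write-up are worth retaining: the explicit argument that the multiset of boundary self-intersections is an invariant (the paper compresses this into the phrase that the maps ``do not affect the type''), and the observation that abstract automorphism groups cannot separate infinite-type surfaces, e.g.\ $X(6,1)$ and $X(2,3)$, so the geometric invariant is genuinely needed there. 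Your multiset $\{0,0,-a,-b\}$ for the square is also the correct one: it matches Lemma~\ref{Lemma:BlowUpSquare}, while the type $(0,-(b-1),-(a-1),0)$ displayed at the opening of Section~\ref{SecSquare} is a typo.
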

\begin{proof}
If  $(a,b)=(d,c)$, the isomorphism is given by $(y_1,y_2,y_3,y_4)\mapsto (y_4,y_3,y_2,y_1)$. 

Suppose now that $X(a,b)$ is isomorphic to $X(c,d)$. The automorphism groups of $X(a,b)$ and $Y(c,d)$ being isomorphic, the only cases to consider are when $ab\ge 4$ and $cd\ge 4$ (by Theorem~\ref{TheThm}). 

We take a compactification $(Y_1,B_1)$ of $X(a,b)$ by a standard square of type $(0,0,-a,-b)$ (see Section~\ref{SecSquare} and in particular Lemma~\ref{Lemma:BlowUpSquare}), and a compactification $(Y_2,B_2)$ of $X(c,d)$ by a standard square of type $(0,0,-c,-d)$. The isomorphism $X(a,b)\to X(c,d)$ decomposes into fibered modification and isomorphisms of pairs (Proposition~\ref{Prop:Ngons}). These maps do not affect the type of the boundary (which is defined up to permutations), so $(a,b)=(c,d)$ or $(a,b)=(d,c)$.
\end{proof}

\bibliographystyle{plain}

\end{document}